\documentclass[11pt,leqno]{amsart}
\usepackage{graphicx} % Required for inserting images
\usepackage[margin=1.1in]{geometry}
\usepackage{times}
\usepackage{pdfsync}
\usepackage{dsfont}
\usepackage{color} 
\usepackage{amssymb}
\usepackage{bm} 
\usepackage{mathtools}

\usepackage{caption}
\usepackage{microtype}
\usepackage{enumerate}
\usepackage{mathabx}

\newcommand{\R}{\mathbb{R}}

\newcommand{\tacka}{\,\cdot\,}

\DeclareMathOperator{\diam}{diam}

\newtheorem{theorem}{Theorem}[section]
 
\newtheorem{corollary}[theorem]{Corollary}
\newtheorem{definition}[theorem]{Definition}
\newtheorem{example}[theorem]{Example}
\newtheorem{lemma}[theorem]{Lemma}

\theoremstyle{remark}
% This sets the font so that definitions etc are not in italics
\newtheorem{remark}[theorem]{Remark}

% Ruiyu's comment color
\definecolor{cherry}{rgb}{0.8,0.1,0.5}

\def\dd{\mathrm{d}}
\def\E{\mathbb{E}}

\usepackage{bbm}
\usepackage{mathrsfs}
\usepackage{galois}
\usepackage{float}
\usepackage{subfig}
\usepackage{amsmath}
\usepackage[shortlabels]{enumitem}

\usepackage{mleftright}
\mleftright
\usepackage{centernot}
%\usepackage{authblk}
%\usepackage[hypertexnames=false]{hyperref}

% Prof. Slepcev's comment color
%\definecolor{darkgreen}{rgb}{0.1,0.6,0.2}
%\newcommand{\De}[1]{{\color{darkgreen}#1}}

\usepackage{bbm}
\title[sliced Wasserstein distance on Hilbert spaces]{Sliced Wasserstein distance between probability measures on infinite dimensional Hilbert spaces}
\author{Ruiyu Han}
\address[Ruiyu Han]{Department of Mathematical Sciences\\Carnegie Mellon University\\Pittsburgh, PA 15213}
\email{ruiyuh@andrew.cmu.edu}
\date{}

\begin{document}

\subjclass{%
  Primary:
    60D05,  
  Secondary:
    53B12.
  }
\keywords{
 Hilbert spaces, (sliced) Wasserstein distance, optimal transport.
}

\thanks{This work has been partially supported by the National Science Foundation under grant DMS 2206069.}
\maketitle

\begin{abstract}
The sliced Wasserstein distance has been widely studied as a tool for comparing probability measures on $\mathbb{R}^d$. In this work, we rigorously extend the notion of sliced Wasserstein distance to measures on infinite-dimensional separable Hilbert spaces. We further characterize the induced topology in terms of narrow convergence of measures and provide quantitative results on approximation through empirical measures.
\end{abstract}

\section{Introduction}
 Let  $(\mathcal X,d)$ be a Polish metric space and let $\mathcal P(\mathcal X)$ denotes the set of probability measures on $\mathcal X$. The Wasserstein distance of order $p$ with $p\in [1,\infty)$ is defined on the space of probability measures which have a finite moments of order $p$, denoted as $\mathcal P_p(\mathcal X)$, where
\[
\mathcal P_p(\mathcal X)=\{ \mu\in\mathcal P(\mathcal X)\: : \: \int_{\mathcal{X}}d(x,x_0)^p\dd\mu(x)<\infty\},
\]
where $x_0\in\mathcal{X}$ is arbitrary. The Wasserstein distance $W_p$ between two probability measures $\mu,\nu\in \mathcal P_p(\mathcal X)$  is defined as 
\begin{equation}
W_p(\mu,\nu)=\left(\inf\limits_{\pi\in\Pi(\mu,\nu)}\int_{\mathcal{X} \times \mathcal{X}}d(x,y)^p\pi(\dd x,\dd y)\right )^{\frac1p},
\end{equation}
where $\Pi(\mu,\nu)$ is the set of transport plans, i.e.,
\[
\Pi(\mu,\nu)=\{\gamma\in\mathcal P(\mathcal{X}\times \mathcal{X})\: :\: \forall A, B\text{\:Borel\:}, \gamma(A\times B)=\mu(A)\nu(B)\}.
\]

%The Wasserstein distance~\cite{villani}.
%$\Pi(\mu,\nu)$ is the set of probability measures $\pi$ on $\mathcal{X}\times\mathcal{X}$ having the marginal distributions

%In particular, when $\mathcal X=\mathbb R^d$,
%\begin{equation}
%W_p(\mu,\nu)=\left(\inf\limits_{\pi\in\Pi(\mu,\nu)}\int_{\mathbb{R} \times \mathbb{R}}|x-y|^p\pi(\dd x,\dd y)\right )^{\frac1p}.
%\end{equation}
The Wasserstein distance rises as a fundamental metric for quantifying the dissimilarity between probability distributions, boasting a myriad of applications within the realms of statistics and machine learning. However, it suffers from the curse of dimension limiting its application to large-scale data analysis,  under which the empirical measure shows slow convergence to the true distribution as the dimension becomes large~\cite{weed2017sharp,fournier2013rate}. In order to alleviate the computational load, sliced Wasserstein distance emerged~\cite{bonneel2014sliced,Bonnotte_SW} and received a surge of interest since the convergence of empirical measure under sliced Wasserstein distance is independent of the dimension. Many variants of sliced Wasserstein distance on $\mathbb R^d$ have been explored, such as the generalized sliced Wasserstein distance~\cite{kolouri2019generalized}, energy-based sliced Wasserstein distance, and hierarchical
sliced Wasserstein distance~\cite{nguyen2023hierarchical}. In particular, the sliced Wasserstein distance has been the focus of much research in recent years, to name a few~\cite{park2024geometryanalyticpropertiessliced, cozzi2024longtimeasymptoticsslicedwassersteinflow, bonet2024slicedwassersteindistancesflowscartanhadamard}.

A natural question arises that if it is possible to define a extended notion of sliced Wasserstein distance for measures on infinite dimensional spaces, as the Wasserstein distance exists for such measures. In particular, the convergence rate of the expectation of the sliced Wasserstein distance between the true and empirical measure is of interested. If the convergence rate is independent of dimension, then it implies that the sliced Wasserstein distance does not suffer from the curse of dimensionality. The work~\cite{rustamov2023intrinsic} defines the sliced Wasserstein distance on compact manifolds and provides real data examples. In this paper, we establish the notion of sliced Wasserstein distance between measures on an infinite dimensional separable Hilbert space in a more theoretical view, which also allows for noncompact domains. We show that empirical measures approximate the true distribution in sliced Wasserstein distance at  parametric rates. We also show that the sliced Wasserstein distance characterize the narrow convergence of measures.

The definition of sliced Wasserstein distance~\eqref{def: sw_infinite} for measures on infinite dimensional spaces resembles that of measures on $\mathbb R^d$, where the main task is to make the surface integral well-defined. The newly-defined sliced Wasserstein distance indeed depicts the narrow convergence of measure similarly as Wasserstein distance does~\cite{santambrogio,villani}, but it turns out to require stronger conditions than that in $\mathbb R^d$ to infer the asymptotic behaviour of measures, see Theorem~\ref{sw: weak topo_w} below. In particular, the requirement of further assumptions origins from the loss of compactness of the unit sphere. Meanwhile, the approximation via empirical measures  survives from the curse of dimension, see Theorem~\ref{thm: empirical}. It shares the same behaviour as the $p$-sliced Wasserstein distance on between probability measures on $\mathbb R^d$ ~\cite{Manole_2022}. Compared to the results for Wasserstein distances~\cite{fournier2013rate,10.3150/19-BEJ1151}, the sliced Wasserstein distance reveals its computational efficiency, see Subsection~\ref{subsec: comparison_Wasserstein} for further details.

Whether the notion of sliced Wasserstein distance has a parallel definition in general infinite dimensional Banach space is unknown, but we point out that, within the scope of this paper, the requirement of Hilbert space is crucial for the sliced Wasserstein distance to be well-defined. In particular, inner product and decomposition theorem allow the projection to resemble that in the Euclidean space. We also point out that for measures on $\mathbb R^d$, there is another equivalent definition of sliced Wasserstein distance using Radon transform~\cite{bonneel2014sliced,nguyen2023hierarchical}. Radon transform does have several extensions in infinite dimensions~\cite{becnel,bogachev_radon,Mihai}, but they either appear hard to tackle~\cite{bogachev_radon} or only apply to $L^2$ functions on a certain probability space~\cite{becnel,Mihai}. Further exploration in this direction is welcomed. 

The structure of the rest of paper is outlined as follows. In Section~\ref{section: def of sw} we will provide a rigorous definition of sliced Wasserstein distance between measures on an infinite dimensional separable Hilbert space. Section~\ref{section: narrow_conv_sw} is devoted to characterize the narrow convergence of measures via the newly-defined sliced Wasserstein distance. Finally, in Section~\ref{section: empirical} we study the convergence rate of empirical measure, which is consistent with those results in finite dimensions~\cite{Manole_2022}.

\subsection{Notation}
In the following context, let the order $p\in [1,\infty)$. $X$ is an infinite dimensional separable Hilbert space, where the norm denoted by $\|\cdot\|$ is induced by the inner product $\langle \cdot \rangle$. The space of probability measures with finite $p$-moments can be simplified as
\[
\mathcal P_p(X)=\{ \mu\in\mathcal P(\mathcal X)\: : \: \int_{\mathcal{X}}\|x\|^p\dd\mu(x)<\infty\}.
\]
For $\mu\in\mathcal P_p(X)$, define $M_p(\mu)=\int_X\|x\|^p\dd\mu(x)$.

\textit{Pushforward measure}: given a probability measure $\mu$ on $X$ and a unit vector $\theta\in X$, we define the pushforward measure $\hat{\mu}_\theta$ via $$\hat{\mu}_\theta(A)=\mu(\{x\in X: \langle x, \theta\rangle \in A\}),\qquad  A\subseteq \R \text{ Borel}.$$

%The above $\hat{\mu}_{\theta}$ can also be viewed as the pushforward measure of $\mu$ under the projection operator $P_{\theta}: X\to \theta\mathbb R, P_{\theta}(x)=\langle \theta\rangle$
 %Let $\theta\in X$ be a vector
%with $\|\theta\|=1$, 
%then $Z:=\theta \mathbb R$ is a closed subspace. Every element $x\in X$ can be uniquely written as $x=z+w$ with $z\in Z$ and $w\in Z^{\perp}$. The projection map $\tilde P_{\theta}: X\to Z, \tilde P_{\theta}(x)=z$ is well-defined. It is also measurable since it is a bounded linear map. It follows that the map $P_{\theta}: X\to \mathbb R, P_{\theta}(x)=\langle \theta, z\rangle$ is also measurable. A simple observation is that
%$x= \langle \theta,x\rangle\theta+ (x- \langle \theta,x\rangle\theta)$. It could be checked that $\langle \theta,x\rangle\theta\in Z$ and $x- \langle \theta,x\rangle\theta\in Z^{\perp}$. Thus $P_{\theta}(x)=\langle \theta,x\rangle$.

\textit{Empirical measure: } Let $X_1, \dots, X_n$ be an i.i.d.\ sample from $\mu$, the empirical measure $\mu^n$ of $\mu$ is defined as
\[
\mu^n:= \frac{1}{n} \sum_{j=1}^n \delta_{X_j}.
\]
\section{Sliced 
Wasserstein Distance on $\mathcal P_p(X)$}
\label{section: def of sw}

This section is devoted to establishing a well-defined notion of sliced Wasserstein distance between measures in $\mathcal P_p(X)$. Before that, let's recall the definition of sliced Wasserstein distance on $\mathcal P_p(\mathbb R^d).$
For $\mu,\nu\in\mathcal P_p(\mathbb R^d)$, the sliced Wasserstein distance of order $p\geq 1$, denoted as $SW_p$, is defined as follows: 
\begin{equation}\label{sw: finite}
SW^p_p(\mu,\nu)=\frac{1}{\mathcal H(\mathbb S^{d-1})}\int_{\mathbb S^{d-1}}W_p^p(\hat\mu_{\theta},\hat\nu_{\theta})\dd \mathcal H^{d-1}(\theta),
\end{equation}
where $\mathcal H^{d-1}$ denotes the $d-1$ dimensional Hausdorff measure and $\mathcal H^{d-1}(\mathbb S^{d-1})$ denotes the surface area of the $d-1$ dimensional unit sphere. %$\hat\mu_{\theta}:=P_{\theta\#}\mu$ and $\hat\nu_{\theta}:=P_{\theta\#}\nu$, where $P_{\theta}: \mathbb R^d\to\mathbb R, P_{\theta}(x)=\langle \theta, x \rangle$ is the projection operator. 
For two measures $\mu,\nu\in\mathcal P(X)$, we aims to construct an analogy as~\eqref{sw: finite}.

In infinite dimensional space there is no longer a compact unit sphere that is $1$ dimension less than the space dimension. To have an analogy as $\mathbb S^{d-1}$ in $\mathbb R^d$, we take the candidate $S:=\{x\in X| \|x\|=1\}$ which consists of unit vectors in every direction. Our goal is to make the following formal integral well-defined
\begin{equation}\label{formal_sw}
\frac{1}{\gamma_S(S)}\int_{\|\theta\|=1}W_p^p(\hat\mu_{\theta},\hat\nu_{\theta})\gamma_S(\dd\theta),
\end{equation}
where $\gamma_S$ is some finite Borel measure on $S$. %$\hat\mu_{\theta},\hat\nu_{\theta}$ are the pushforward measure to the subspace $\theta\mathbb R$ of measure $\mu$ and $\nu$, respectively. $W_p(\hat\mu_{\theta},\hat\nu_{\theta})$ is the Wasserstein distance between $\hat\mu_{\theta}$ and $\hat\nu_{\theta}$, viewed as measures on $\mathbb R$.

\subsection{Surface measure on unit sphere} \label{subsection: surface measure}
Surface measure of infinite dimensional spaces is a topic of its own interest. One eligible path to find a finite and strictly positive Borel measure defined on unit sphere $S$ is that we first define a strictly positive probability measure $\gamma$ on the whole space $X$ and then take $\gamma_S$ to be the surface measure associated to $\gamma$. The existence of such surface measure is a nontrivial task. Initially it is only defined for sufficiently regular surface using tools from Malliavin calculus~\cite{bogachev_gaussian}; later the restrictions are reduced in~\cite{daprato2016malliavin,daprato2014surface,Da_Prato_surface}. 

%The set of finite and strictly positive Borel measure on $S$ is nonempty. 

%Our primary concern is a valid Borel measure on $S$. To be specific, we are looking for a Borel measure on $S$ that is strictly positive and finite. The topology on $S$ will be the relative topology induced by $X$, where the topology of $X$ is the metric topology.

We refer to~\cite[Section 3]{Da_Prato_surface} for solid examples of measure $\gamma$.  Then we define the surface measure $\gamma_S$ concentrated on $S$ associated to $\gamma$. Let $S^{\epsilon}:=\{x\in X\:|\: 1-\epsilon\leq\|x\|\leq 1+\epsilon\}$ and let $f: X\to\mathbb R$  be a Borel function defined. Set
\begin{equation}
\int f(x)\dd \gamma_S=\lim\limits_{\epsilon\to 0}\frac{1}{2\epsilon}\int_{S^{\epsilon}}f(x)\,\gamma(\dd x).
\end{equation}

According to~\cite[Theorem 2.11, Proposition 3.5, Example 3.8]{Da_Prato_surface}, there exists a unique Borel measure $\gamma_S$ whose support is included in $S$, such that for $\varphi: X\to\mathbb R$ which is uniformly continuous and bounded,
\begin{equation}\label{eq: F_varphi}
\int_{X}\varphi(x)\gamma_S(\dd x)=(F_{\varphi}(r))'|_{r=1}, \quad F_{\varphi}(r):=\int_{\|x\|^2\leq 1}\varphi(x)\gamma(\dd x).
\end{equation}

Taking $\varphi\equiv 1$ yields  $\int_{\|\theta\|=1}1\gamma_S(\dd\theta)<\infty$. It can be easily checked that if $\gamma$ is strictly positive, then $\gamma_S$ is strictly positive. In particular, we can pick $\gamma$ a non degenerate centered Gaussian measure on $X$. Recall the definition of Gaussian measure in infinite dimensions~\cite{eldredge2016analysis}:
\begin{definition}[Infinite-dimensional Gaussian measures]
    Let $W$ be a topological vector space and $\mu$ a Borel probability on $W$. $\mu$ is Gaussian if and only if, for each continuous linear functional $f$ on $W^*$, the pushforward $\mu\comp f^{-1}$ is a Gaussian measure on $\mathbb R$.
\end{definition}
Since $X$ is a separable Hilbert space, there is a more explicit description of a non degenerated centered Gaussian measure $\gamma$ on $X$. By Karhunen-Lo\`eve expansion~\cite{Adler},  $\gamma=\mathcal L(\sum\limits_{i=1}^{\infty}\lambda_i\xi_i e_i)$. $\mathcal L$ denotes the law, $\{\xi_i\}_{i\in\mathbb N}$ are i.i.d. standard Gaussian. The eigenvalues $\{\lambda_i\}_{i\in\mathbb N}$ satisfy $\lambda_i\neq 0$ and $\sum\limits_{i=1}^{\infty}\lambda_i^2<\infty$.

\subsection{Wasserstein distance between projected measures}
\label{subsection: uniform_continuity}
In this subsection we will prove the uniform continuity and bound of $W_p^p(\hat\mu_{\theta},\hat\nu_{\theta})$ given that the measures have appropriate moments, which ensures that $W_p^p(\hat\mu_{\theta}, \hat\nu_{\theta})$ is integrable with respected to $\gamma_S$.

We first demonstrate that the quantity $W_p^p(\hat\mu_{\theta},\hat\nu_{\theta})$ is well-defined. Given a unit 
vector $\theta\in X$, the pushforward measure $\hat\mu_{\theta}:=P_{\theta\#}\mu$ is a probability measure on $\mathbb R$ and $\hat\mu_{\theta}\in\mathcal P_p(\mathbb R)$; indeed, by change of variables,
\begin{equation}\label{est: p moment}
\int_{\mathbb R}|y|^p\,\dd \hat\mu_{\theta}(y) = \int_{X}\|\tilde P_{\theta}(x)\|^p\,\dd \mu(x)\leq \int_{X}\|x\|^p\,\dd \mu(x)<\infty.
\end{equation}
Therefore, for $\mu,\nu\in\mathcal P_p(X)$ and a unit 
vector $\theta\in X$, the Wasserstein distance $W_p(\hat\mu_{\theta},\hat\nu_{\theta})$ is well-defined.

%\begin{remark}
%Just something to be aware of: the projection of a measurable set may not be measurable. 
%\end{remark}

Now we are ready to check $W^p_p(\hat\mu_{\theta},\hat\nu_{\theta})$ is uniformly continuous and bounded on $S$. Observe that the function $W_p(\hat\mu_{\theta},\hat\nu_{\theta})$ is Lipschitz on $S$. 
%Since $S$ is a closed bounded subset,  the function $W_p(\hat\mu_{\theta},\hat\nu_{\theta}): S\to \mathbb R$ can be extended to the whole $X$ preserving the uniform continuity and the bounds (see Corollary 2 of~\cite{bams/1183497871}).  Then by Theorem 2.11 and Propostion 3.5 in~\cite{Da_Prato_surface}, $\int_{\|\theta\|=1} W^p_p(\hat\mu_{\theta},\hat\nu_{\theta})\gamma_S(\dd \theta)$ is well-defined.

\begin{lemma} \label{est: pushforward_lip} Given that $\mu,\nu\in \mathcal P_p(X)$,
$W_p(\hat\mu_{\theta},\hat\nu_{\theta})$ is Lipschitz on $S$ with Lipschitz constant $(M_p(\mu))^{\frac1p}+(M_p(\nu))^{\frac1p}$.
\end{lemma}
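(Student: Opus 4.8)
The plan is to reduce the claim, via the triangle inequality for the one-dimensional Wasserstein distance, to an estimate on the distance between two different projections of the \emph{same} measure, and then to control that quantity by writing down an explicit coupling built from $\mu$ (resp.\ $\nu$) itself.

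First I would fix two unit vectors $\theta_1,\theta_2\in S$. Since $\hat\mu_{\theta_i},\hat\nu_{\theta_i}\in\mathcal P_p(\mathbb R)$ by \eqref{est: p moment}, each $W_p(\hat\mu_{\theta_i},\hat\nu_{\theta_i})$ is finite, and the triangle inequality in $(\mathcal P_p(\mathbb R),W_p)$ gives
\begin{equation*}
\bigl|W_p(\hat\mu_{\theta_1},\hat\nu_{\theta_1})-W_p(\hat\mu_{\theta_2},\hat\nu_{\theta_2})\bigr|\leq W_p(\hat\mu_{\theta_1},\hat\mu_{\theta_2})+W_p(\hat\nu_{\theta_1},\hat\nu_{\theta_2}),
\end{equation*}
so it is enough to show $W_p(\hat\mu_{\theta_1},\hat\mu_{\theta_2})\leq\|\theta_1-\theta_2\|\,(M_p(\mu))^{1/p}$ together with the analogous inequality for $\nu$.

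For this key estimate I would produce a transport plan directly from $\mu$: let $\Phi:X\to\mathbb R^2$ be given by $\Phi(x)=(\langle\theta_1,x\rangle,\langle\theta_2,x\rangle)$, which is Borel measurable because $P_{\theta_1},P_{\theta_2}$ are continuous linear functionals (as already observed in the discussion preceding the lemma), and set $\pi:=\Phi\#\mu$. Composing $\Phi$ with the two coordinate projections of $\mathbb R^2$ returns $P_{\theta_1}$ and $P_{\theta_2}$, so $\pi\in\Pi(\hat\mu_{\theta_1},\hat\mu_{\theta_2})$ and therefore
\begin{equation*}
W_p^p(\hat\mu_{\theta_1},\hat\mu_{\theta_2})\leq\int_{\mathbb R^2}|s-t|^p\,\pi(\dd s,\dd t)=\int_X|\langle\theta_1-\theta_2,x\rangle|^p\,\dd\mu(x)\leq\|\theta_1-\theta_2\|^p\int_X\|x\|^p\,\dd\mu(x),
\end{equation*}
the last step being the pointwise Cauchy--Schwarz inequality $|\langle\theta_1-\theta_2,x\rangle|\leq\|\theta_1-\theta_2\|\,\|x\|$. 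Taking $p$-th roots gives the desired bound, and repeating the argument with $\nu$ in place of $\mu$ and then combining with the displayed triangle inequality finishes the proof.

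I do not expect a genuine obstacle here. The only points requiring a little care are the measurability of $\Phi$ and the verification that $\pi$ has the prescribed marginals, both immediate from the structure of $P_\theta$ established earlier, together with the standard fact that $W_p$ satisfies the triangle inequality on $\mathcal P_p(\mathbb R)$; finiteness of every integral involved is guaranteed by the standing hypothesis $\mu,\nu\in\mathcal P_p(X)$.
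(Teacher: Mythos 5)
Your proposal is correct and follows essentially the same route as the paper's proof: the same triangle-inequality reduction, the same coupling $(P_{\theta_1}\times P_{\theta_2})\#\mu$ (your $\Phi\#\mu$), and the same Cauchy--Schwarz step. No gaps.
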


\begin{proof}
  Let $\theta,\gamma \in S$. Triangle inequality gives that
\begin{equation*}
|W_p(\hat\mu_{\theta},\hat\nu_{\theta})-W_p(\hat\mu_{\gamma},\hat\nu_{\gamma})|\leq W_p(\hat\mu_{\gamma},\hat\mu_{\theta})+W_p(\hat\nu_{\gamma},\hat\nu_{\theta}).
\end{equation*}
Notice that $\pi_{\mu}:=(P_{\theta}\times P_{\gamma})\#\mu$ is a transport plan between $\hat\mu_{\theta}$ and $\hat\mu_{\gamma}$. Then
\begin{align*}
W_p^p(\hat\mu_{\theta},\hat\mu_{\gamma})\leq \int_{\mathbb R^2}|y-z|^p\dd \pi_{\mu}(y,z)&=\int_{X}|P_{\theta}(x)-P_{\gamma}(x)|^p\dd\mu(x)\\
&= \int_X |\langle \theta-\gamma,x\rangle|^p\dd \mu(x)\leq \|\theta-\gamma\|^p \int_X\|x\|^p\dd\mu(x),
\end{align*}
where the last inequality we use Cauchy-Schwarz inequality. The above argument implies that
\[
W_p(\hat\mu_{\theta},\hat\mu_{\gamma})\leq \|\theta-\gamma\|(M_p(\mu))^{\frac1p}.
\]
 Analogously, $W_p(\hat\nu_{\theta},\hat\nu_{\gamma})\leq \|\theta-\gamma\|(M_p(\nu))^{\frac1p}$. Therefore,
\begin{equation}\label{eq: est_lip_pushforward}
|W_p(\hat\mu_{\theta},\hat\nu_\theta)-W_p(\hat\mu_{\gamma},\hat\nu_\gamma)|\leq \|\theta-\gamma\|((M_p(\mu))^{\frac1p}+(M_p(\nu))^{\frac1p}).\qedhere
\end{equation}
\end{proof}
Equipped with Lemma~\ref{est: pushforward_lip}, we conclude this subsection with the following theorem:
\begin{theorem}\label{thm: uni_cont_bound}
Given that $\mu,\nu\in \mathcal P_p(X)$, $W^p_p(\hat\mu_{\theta},\hat\nu_{\theta})$ is bounded on S, in particular 
\begin{equation}
\forall\theta\in S,\quad W^p_p(\hat\mu_{\theta},\hat\nu_{\theta})\leq 2^p\left(M_p(\mu)+M_p(\nu)\right).
\end{equation}

Meanwhile, $W^p_p(\hat\mu_{\theta},\hat\nu_{\theta})$ is Lipschitz on $S$ with Lipschitz constant \begin{equation}p 2^{p-1}\max\{M_p(\mu),M_p(\nu)\}^{\frac{p-1}{p}}\left((M_p(\mu))^{\frac1p}+(M_p(\nu))^{\frac1p}\right).
\end{equation}

\end{theorem}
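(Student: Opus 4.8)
The plan is to split the theorem into its two claims and handle the bound first, since it is the easier half. For the $L^\infty$ bound, fix a unit vector $\theta \in S$. By the triangle inequality for $W_p$ on $\mathcal P_p(\mathbb R)$ together with the elementary inequality $(a+b)^p \le 2^{p-1}(a^p+b^p)$, it suffices to bound $W_p^p(\hat\mu_\theta, \delta_0)$ and $W_p^p(\hat\nu_\theta,\delta_0)$, where $\delta_0$ is the Dirac mass at the origin in $\mathbb R$. But $W_p^p(\hat\mu_\theta,\delta_0) = \int_{\mathbb R} |y|^p \, \dd\hat\mu_\theta(y) \le M_p(\mu)$ by the moment estimate already recorded in \eqref{est: p moment}, and similarly for $\nu$. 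Writing $W_p(\hat\mu_\theta,\hat\nu_\theta) \le W_p(\hat\mu_\theta,\delta_0) + W_p(\delta_0,\hat\nu_\theta) \le M_p(\mu)^{1/p} + M_p(\nu)^{1/p}$ and raising to the $p$-th power, then applying $(a+b)^p \le 2^{p-1}(a^p+b^p) \le 2^p \max\{a^p,b^p\} \le 2^p(a^p + b^p)$, gives $W_p^p(\hat\mu_\theta,\hat\nu_\theta) \le 2^p(M_p(\mu) + M_p(\nu))$ uniformly in $\theta$.

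For the Lipschitz bound on $W_p^p$, I would combine Lemma \ref{est: pushforward_lip} with the boundedness just established, using the mean value inequality for the map $t \mapsto t^p$. Concretely, for $\theta, \gamma \in S$ set $a = W_p(\hat\mu_\theta,\hat\nu_\theta)$ and $b = W_p(\hat\mu_\gamma,\hat\nu_\gamma)$. Then
\begin{equation*}
|a^p - b^p| \le p \max\{a,b\}^{p-1} |a - b|.
\end{equation*}
From the $L^\infty$ bound we have $\max\{a,b\}^p \le 2^p(M_p(\mu)+M_p(\nu))$; a cleaner estimate, which is the one quoted in the statement, comes from noticing $a = W_p(\hat\mu_\theta,\hat\nu_\theta) \le M_p(\mu)^{1/p}+M_p(\nu)^{1/p}$, but to match the stated constant one instead bounds $\max\{a,b\}^{p-1}$ by $2^{p-1}\max\{M_p(\mu),M_p(\nu)\}^{(p-1)/p}$, using $a,b \le 2\max\{M_p(\mu),M_p(\nu)\}^{1/p}$ (which follows from $M_p(\mu)^{1/p}+M_p(\nu)^{1/p} \le 2\max\{M_p(\mu),M_p(\nu)\}^{1/p}$). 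Finally Lemma \ref{est: pushforward_lip} supplies $|a-b| \le \|\theta-\gamma\|(M_p(\mu)^{1/p}+M_p(\nu)^{1/p})$, and multiplying the three factors yields exactly the Lipschitz constant $p\,2^{p-1}\max\{M_p(\mu),M_p(\nu)\}^{(p-1)/p}(M_p(\mu)^{1/p}+M_p(\nu)^{1/p})$.

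The only genuine subtlety — and the step I would be most careful about — is the mean value inequality $|a^p - b^p| \le p\max\{a,b\}^{p-1}|a-b|$ when $p \in [1,\infty)$ is not an integer: this is just the fundamental theorem of calculus applied to $s \mapsto s^p$ on the interval $[\min\{a,b\}, \max\{a,b\}]$, whose derivative $ps^{p-1}$ is monotone, so it is bounded above by $p\max\{a,b\}^{p-1}$; the case $p=1$ is trivial and the case $a=b$ is vacuous. Everything else is bookkeeping: tracking which crude bound ($\max\{a,b\} \le 2\max\{M_p(\mu),M_p(\nu)\}^{1/p}$ versus the sharper $a+b$ estimate) produces the constant as written. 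With Lemma \ref{est: pushforward_lip} and \eqref{est: p moment} in hand, no further input is needed, so the proof is short.
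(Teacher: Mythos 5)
Your proof is correct and follows essentially the same strategy as the paper's: the uniform bound from the $p$-th moments (you route it through $W_p(\cdot,\delta_0)$ rather than a direct transport-plan estimate, which is an immaterial difference), followed by the mean value inequality $|a^p-b^p|\le p\max\{a,b\}^{p-1}|a-b|$ combined with Lemma \ref{est: pushforward_lip}. If anything, your explicit justification that $\max\{a,b\}^{p-1}\le 2^{p-1}\max\{M_p(\mu),M_p(\nu)\}^{(p-1)/p}$ via $a,b\le M_p(\mu)^{1/p}+M_p(\nu)^{1/p}$ is a detail the paper's proof leaves implicit, so no gaps remain.
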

\begin{proof}[Proof of Theorem~\ref{thm: uni_cont_bound}]
\emph{Bound:} 
 Let $\pi_{\theta}\in\Pi(\hat\mu_{\theta},\hat\nu_{\theta})$. Recall that $\Pi(\hat\mu_{\theta},\hat\nu_{\theta})$ consists of probability measures on $\mathbb R\times\mathbb R$ with marginals $\hat\mu_{\theta}$ and $\hat\nu_{\theta}$, respectively. For any $\theta\in S$,
\begin{equation*}
W_p^p(\hat\mu_{\theta},\hat\nu_{\theta})\leq\int_{\mathbb R^2}|x-y|^p\pi_{\theta}(\dd x,\dd y)\leq 2^p\int_{\mathbb R^2}(|x|^p+|y|^p)\pi_{\theta}(\dd x,\dd y)
\overset{\eqref{est: p moment}}{\leq}2^p(M_p(\mu)+M_p(\nu)).
\end{equation*}
\emph{Uniform Continuity:} For $\theta,\gamma\in S$,
\begin{align*}
\MoveEqLeft|W^p_p(\hat\mu_{\theta},\hat\nu_{\theta})-W^p_p(\hat\mu_{\gamma},\hat\nu_{\gamma})|
\leq p\max\{W^{p-1}_p(\hat\mu_{\theta},\hat\nu_{\theta}),W^{p-1}_p(\hat\mu_{\gamma},\hat\nu_{\gamma})\}\tacka |W_p(\hat\mu_{\theta},\hat\nu_{\theta})-W_p(\hat\mu_{\gamma},\hat\nu_{\gamma})|\\
&\leq  p 2^{p-1}\max\{M_p(\mu),M_p(\nu)\}^{\frac{p-1}{p}}\tacka|W_p(\hat\mu_{\theta},\hat\nu_{\theta})-W_p(\hat\mu_{\gamma},\hat\nu_{\gamma})|\\
&\overset{\eqref{eq: est_lip_pushforward}}{\leq}p 2^{p-1}\max\{M_p(\mu),M_p(\nu)\}^{\frac{p-1}{p}}\left((M_p(\mu))^{\frac1p}+(M_p(\nu))^{\frac1p}\right)\|\theta-\gamma\|,
\end{align*}
where the first inequality we use $|a^p-b^p|\leq  p \max\{a,b\}^{p-1}|a-b|$ for $a,b\in\mathbb R$, $a,b\geq 0$ and $p\in [1,\infty)$.

\end{proof}
\subsection{Sliced Wasserstein distance on $\mathcal P_p(X)$}

Now we are well-equipped to make the formal expression~\eqref{formal_sw} rigorous.
\begin{definition}
Given $\gamma_S\in\mathcal P(S)$ be strictly positive Borel measure defined on $S$ such that $\gamma_S(S)=\int_{\|\theta\|=1}1\gamma_S(\dd\theta)<\infty$. Let $\mu,\nu\in\mathcal P_p(X)$, the $p$-sliced Wasserstein distance (with respect to $\gamma_S$) is defined as
\begin{equation}\label{def: sw_infinite}
SW^{\gamma}_p(\mu,\nu)=\left (\frac{1}{\gamma_S(S)}\int_{\|\theta\|=1}W^p_p(\hat\mu_{\theta},\hat\nu_{\theta})\gamma_S(\dd \theta)\right )^{\frac1p}.
\end{equation}
\end{definition}

Recall that in $\mathbb R^d$, the Borel measure on $\mathbb{S}^{d-1}$ is often taken as $\mathcal{H}^{d-1}$. In the infinite dimensional case, we can also choose some special measure. In particular, we can take $\gamma_S$ the surface measure on $S$ associated to a non degenerate centered Gaussian measure on $X$. 
\begin{definition}[Gaussian as reference measure]
Given $\gamma\in\mathcal P(X)$ be a non degenerate centered Gaussian measure on $X$. Let $\mu,\nu\in\mathcal P_p(X)$, the $p$-sliced Wasserstein distance (with respect to $\gamma$) is defined as
\begin{equation}\label{def: sw_infinite_surface}
SW^{\gamma}_p(\mu,\nu)=\left (\frac{1}{\gamma_S(S)}\int_{\|\theta\|=1}W^p_p(\hat\mu_{\theta},\hat\nu_{\theta})\gamma_S(\dd \theta)\right )^{\frac1p},
\end{equation}
where $\gamma_S$ is the surface measure on $S$ associated to $\gamma$, and $\gamma_S(S)=\int_{\|\theta\|=1}1\gamma_S(\dd\theta)$.
\end{definition}

The quantity~\eqref{def: sw_infinite_surface} is well-defined owing to the discussion in Subsection~\ref{subsection: surface measure} and~\ref{subsection: uniform_continuity}. Next, we show that~\eqref{def: sw_infinite} is indeed a distance.
\begin{theorem}\label{prop: sw_distance}
Given $\gamma_S\in\mathcal P(S)$ be a finite, strictly positive Borel measure defined on $S$. For $p\in [1,\infty)$, the $SW_p^{\gamma}$ defined as~\eqref{def: sw_infinite} is a distance.
\end{theorem}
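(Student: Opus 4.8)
The plan is to check the metric axioms one at a time, transferring the corresponding facts about $W_p$ on $\mathbb R$ through the integral against $\gamma_S$; the only substantive point will be the identity of indiscernibles. Finiteness and nonnegativity come for free: by Theorem \ref{thm: uni_cont_bound} the integrand $W_p^p(\hat\mu_\theta,\hat\nu_\theta)$ is bounded on $S$ by $2^p(M_p(\mu)+M_p(\nu))<\infty$, and $\gamma_S(S)\in(0,\infty)$, so $SW_p^\gamma(\mu,\nu)\in[0,\infty)$; measurability of $\theta\mapsto W_p^p(\hat\mu_\theta,\hat\nu_\theta)$ follows from Lemma \ref{est: pushforward_lip} (it is Lipschitz, hence continuous, on $S$). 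Symmetry is immediate from symmetry of $W_p$ on $\mathbb R$, since $W_p(\hat\mu_\theta,\hat\nu_\theta)=W_p(\hat\nu_\theta,\hat\mu_\theta)$ for every unit $\theta$.

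For the triangle inequality, fix $\mu,\nu,\rho\in\mathcal P_p(X)$. For each unit $\theta$, the triangle inequality for $W_p$ on $\mathcal P_p(\mathbb R)$ gives $W_p(\hat\mu_\theta,\hat\nu_\theta)\le W_p(\hat\mu_\theta,\hat\rho_\theta)+W_p(\hat\rho_\theta,\hat\nu_\theta)$. The maps $\theta\mapsto W_p(\hat\mu_\theta,\hat\rho_\theta)$ and $\theta\mapsto W_p(\hat\rho_\theta,\hat\nu_\theta)$ are bounded and continuous on $S$, hence lie in $L^p\!\big(S,\gamma_S/\gamma_S(S)\big)$; applying Minkowski's inequality in this $L^p$ space to the pointwise bound above yields exactly $SW_p^\gamma(\mu,\nu)\le SW_p^\gamma(\mu,\rho)+SW_p^\gamma(\rho,\nu)$.

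It remains to show $SW_p^\gamma(\mu,\nu)=0$ if and only if $\mu=\nu$; the reverse implication is trivial, since then all projections coincide. Conversely, suppose $SW_p^\gamma(\mu,\nu)=0$, i.e. $\int_{\|\theta\|=1} W_p^p(\hat\mu_\theta,\hat\nu_\theta)\,\gamma_S(\dd\theta)=0$. The integrand is nonnegative and continuous on $S$ (Lemma \ref{est: pushforward_lip}), and $\gamma_S$ is strictly positive, so $\supp\gamma_S=S$; therefore the integrand vanishes identically on $S$, that is $\hat\mu_\theta=\hat\nu_\theta$ for every $\theta\in S$. Equality of these push-forwards forces equality of their characteristic functions: for all $\theta\in S$ and all $t\in\mathbb R$, $\int_X e^{\,it\langle\theta,x\rangle}\,\mu(\dd x)=\int_X e^{\,it\langle\theta,x\rangle}\,\nu(\dd x)$. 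Since every $\xi\in X\setminus\{0\}$ can be written $\xi=\|\xi\|\,(\xi/\|\xi\|)$ with $\xi/\|\xi\|\in S$ (and $\xi=0$ is trivial), the characteristic functionals of $\mu$ and $\nu$ agree on all of $X$; as $X$ is a separable Hilbert space, a Borel probability measure on $X$ is uniquely determined by its characteristic functional, whence $\mu=\nu$.

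The main obstacle is this last step: it is an infinite-dimensional Cramér--Wold phenomenon, resting on two ingredients — (i) using strict positivity of $\gamma_S$ together with continuity of $\theta\mapsto W_p^p(\hat\mu_\theta,\hat\nu_\theta)$ to pass from vanishing of the integral to vanishing at every $\theta\in S$, and (ii) the uniqueness theorem for characteristic functionals of Borel probability measures on a separable Hilbert space. Everything else in the proof is a routine application of the metric properties of $W_p$ on $\mathbb R$ combined with Minkowski's inequality in $L^p(\gamma_S)$.
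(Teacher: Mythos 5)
Your proof is correct and follows essentially the same route as the paper's: symmetry and the triangle inequality are transferred from $W_p$ on $\mathbb R$ (the paper leaves the Minkowski step implicit), and the identity of indiscernibles is obtained exactly as in the paper by combining strict positivity of $\gamma_S$ with continuity of $\theta\mapsto W_p^p(\hat\mu_\theta,\hat\nu_\theta)$ to get $\hat\mu_\theta=\hat\nu_\theta$ for all $\theta\in S$, and then invoking injectivity of the characteristic functional. Your version is in fact slightly cleaner, since the paper's proof introduces an unused compact-support assumption on $\mu,\nu$ that your argument correctly dispenses with.
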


\begin{proof}%[Proof of Theorem~\ref{prop: sw_distance}] 
The symmetry is obvious. The triangle inequality follows from the triangle inequality of $W_p$. Let $\mu,\nu\in\mathcal P_p(X)$ with compact support. If $\mu=\nu$, then $SW_p^{\gamma}(\mu,\nu)=0$. It remains to check that if $SW_p^{\gamma}(\mu,\nu)=0$ implies that $\mu=\nu$.

Notice that since $W_p(\hat\mu_{\theta},\hat\nu_{\theta})$ is nonnegative and uniformly continuous,
  $SW_{p}^{\gamma}(\mu,\nu)=0$ implies that
$W_p(\hat\mu_{\theta},\hat\nu_{\theta})\equiv 0$ for $\|\theta\|=1$. Thus for every $\|\theta\|=1$, $\hat\mu_{\theta}=\hat\nu_{\theta}$ in distribution. Pick an arbitrary $f\in X^*=X$, 
\begin{align*}
\int_{X}e^{if(x)}\mu(\dd x)&=\int_{X}e^{i\|f\|\langle x, \frac{f}{\|f\|}\rangle}\mu(\dd x)=\int_{\mathbb R}\exp(i\|f\|y)\hat\mu_{\frac{f}{\|f\|}}(\dd y)\\
&=\int_{\mathbb
R}\exp(i\|f\|y)\hat\nu_{\frac{f}{\|f\|}}(\dd y)=\int_X e^{if(x)}\nu(\dd x).
\end{align*}
By the injectivity of characteristic functions, we obtain $\mu=\nu$.
\end{proof}

\section{Narrow convergence of measures in $\mathcal P_p(X)$}
\label{section: narrow_conv_sw}
%\begin{definition}[\cite{bogachev_gaussian}]
%A sequence $\{\mu_n\}$ of Radon measures on a topological space $X$ is called weakly convergent to a Radon measure $\mu$, if, for each bounded continuous function $f$ on $X$, one has
%\[
%\lim\limits_{n\to\infty}\int_{X}f(x)\mu_n(\dd %x)=\int_{X}f(x)\mu(\dd x).
%\]
%\end{definition}
With the definition in hand, we are at the position to the investigate some properties of the sliced Wasserstein distance. It is natural to ask that if the sliced Wasserstein distance~\eqref{def: sw_infinite} can charaterize the narrow convergence of measures on $\mathcal P_p(X)$ since it is well-known that the Wasserstein distance describes the narrow convergece of probability measures~\cite{villani} and  so does sliced Wasserstein distance on $\mathcal P_p(\mathbb R^d)$~\cite{bayraktar2021strong}. In this section we establish the connection between narrow convergence of measures and the quantity of sliced Wasserstein distance.

To begin with, we recall the definition of narrow convergence, although it will not be directly used in the argument below~\cite{AmbrosioLuigi2008GFIM}. Note that in some context it is called ``weak convergence"~\cite{bogachev_weak_convergence_measure}.
\begin{definition}[\cite{AmbrosioLuigi2008GFIM}]
We say that a sequence $\{\mu_n\}\subset \mathcal P(X)$ is narrowly convergent to $\mu\in\mathcal P(X)$ as $n\to\infty$ if
\[
\lim\limits_{n\to\infty}\int_{X}f(x)\mu_n(\dd x)=\int_{X}f(x)\mu(\dd x)
\]
for every $f$ which is a continuous and bounded real function on $X$.
\end{definition}

We will first show that for a narrow convergent sequence with converging $p$-moments, the sliced Wasserstein distance goes to zero.  The inequality between sliced Wasserstein and Wasserstein distance still holds as that in $\mathcal P_p(\mathbb R^d)$~\cite{Bonnotte_SW,bayraktar2021strong}.
\begin{lemma}\label{est: SW<W}
If $\mu,\nu\in\mathcal P_p(X)$, then $SW_{p}^{\gamma}(\mu,\nu)\leq W_p(\mu,\nu)$.
\end{lemma}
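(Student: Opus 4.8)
The plan is to exploit the fact that the projection maps $P_\theta$ are $1$-Lipschitz from $X$ to $\mathbb R$, so that optimal transport plans on $X$ push forward to competitor transport plans on $\mathbb R$, giving the pointwise bound $W_p^p(\hat\mu_\theta,\hat\nu_\theta)\le W_p^p(\mu,\nu)$ for every $\theta\in S$; integrating against the normalized measure $\gamma_S/\gamma_S(S)$ and taking $p$-th roots then yields the claim.

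\begin{proof}[Proof of Lemma \ref{est: SW<W}]
Fix a unit vector $\theta\in S$ and let $\pi\in\Pi(\mu,\nu)$ be any transport plan between $\mu$ and $\nu$ on $X\times X$. Consider the map $(P_\theta\times P_\theta): X\times X\to\mathbb R\times\mathbb R$ and set $\pi_\theta:=(P_\theta\times P_\theta)\#\pi$. Since the marginals of $\pi$ are $\mu$ and $\nu$, the marginals of $\pi_\theta$ are $P_\theta\#\mu=\hat\mu_\theta$ and $P_\theta\#\nu=\hat\nu_\theta$, so $\pi_\theta\in\Pi(\hat\mu_\theta,\hat\nu_\theta)$. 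Using $\pi_\theta$ as a competitor in the definition of $W_p^p(\hat\mu_\theta,\hat\nu_\theta)$ and then the change of variables formula,
\begin{equation*}
W_p^p(\hat\mu_\theta,\hat\nu_\theta)\le\int_{\mathbb R\times\mathbb R}|y-z|^p\,\dd\pi_\theta(y,z)=\int_{X\times X}|P_\theta(x)-P_\theta(x')|^p\,\dd\pi(x,x').
\end{equation*}
Since $P_\theta(x)-P_\theta(x')=\langle\theta,x-x'\rangle$ and $\|\theta\|=1$, the Cauchy--Schwarz inequality gives $|P_\theta(x)-P_\theta(x')|\le\|x-x'\|$, hence
\begin{equation*}
W_p^p(\hat\mu_\theta,\hat\nu_\theta)\le\int_{X\times X}\|x-x'\|^p\,\dd\pi(x,x').
\end{equation*}
Taking the infimum over $\pi\in\Pi(\mu,\nu)$ on the right-hand side yields $W_p^p(\hat\mu_\theta,\hat\nu_\theta)\le W_p^p(\mu,\nu)$ for every $\theta\in S$. (The right-hand side is finite because $\mu,\nu\in\mathcal P_p(X)$.) Integrating this pointwise inequality with respect to $\gamma_S$ over $S$,
\begin{equation*}
\frac{1}{\gamma_S(S)}\int_{\|\theta\|=1}W_p^p(\hat\mu_\theta,\hat\nu_\theta)\,\gamma_S(\dd\theta)\le\frac{1}{\gamma_S(S)}\int_{\|\theta\|=1}W_p^p(\mu,\nu)\,\gamma_S(\dd\theta)=W_p^p(\mu,\nu).
\end{equation*}
Raising both sides to the power $1/p$ gives $SW_p^\gamma(\mu,\nu)\le W_p(\mu,\nu)$.
\end{proof}

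The only mild subtlety worth flagging is measurability and integrability of $\theta\mapsto W_p^p(\hat\mu_\theta,\hat\nu_\theta)$, but this is already handled by Theorem \ref{thm: uni_cont_bound}, which shows the integrand is bounded and (Lipschitz, hence) continuous on $S$; so the integral against the finite measure $\gamma_S$ is well-defined. There is no real obstacle here — the argument is the verbatim infinite-dimensional analogue of the $\mathbb R^d$ case, the one-Lipschitz property of $P_\theta$ coming for free from $\|\theta\|=1$ together with Cauchy--Schwarz in the Hilbert space $X$.
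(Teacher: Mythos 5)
Your proof is correct and follows essentially the same route as the paper's: push a transport plan forward under $P_\theta\times P_\theta$, bound the projected cost by Cauchy--Schwarz using $\|\theta\|=1$, and integrate over $S$. The only cosmetic difference is that you take an arbitrary plan and pass to the infimum at the end, whereas the paper starts from an optimal plan (invoking its existence); both are fine.
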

\begin{proof}
 There exists an optimal transport plan $\pi$ between $\mu$ and $\nu$ under Wasserstein distance (see Theorem 1.7 in Chapter 1 of~\cite{santambrogio}). Then $(P_{\theta}\times P_{\theta})\# \pi$ is a transport plan between $\hat \mu_{\theta}$ and $\hat \nu_{\theta}$. So
\[
W^p_p(\hat \mu_{\theta},\hat \nu_{\theta})\leq \int_{X^2} |\langle \theta,x\rangle-\langle \theta,y\rangle|^p\dd\pi( x,y).
\]
By Cauchy-Schwarz,
\begin{align*}
SW_{p}^{\gamma}(\mu,\nu)&\leq \left (\frac{1}{\gamma_S(S)}\int_{\|\theta\|=1}\int_{X^2} |\langle \theta,x\rangle-\langle \theta,y\rangle|^p\dd\pi( x,y)\gamma_S(\dd \theta)\right )^{\frac1p}\\
&\leq \left (\frac{1}{\gamma_S(S)}\int_{\|\theta\|=1}\int_{X^2} \|x-y\|^p\|\theta\|^p\dd\pi( x,y)\gamma_S(\dd \theta)\right )^{\frac1p}=W_p(\mu,\nu).
\qedhere
\end{align*}
\end{proof}

Lemma~\ref{est: SW<W} directly gives the following theorem.
\begin{theorem} \label{thm: sw_to_0}
    If $\mu^n,\mu\in\mathcal P_p(X)$, $\mu^n$ converges to $\mu$ narrowly and $\lim\limits_{n\to\infty}\int_{X}\|x\|^p\,\mu^n(\dd x)=\int_{X}\|x\|^p\,\mu(\dd x)$, then $SW^{\gamma}_p(\mu^n,\mu)\to 0$ as $n\to\infty$.
\end{theorem}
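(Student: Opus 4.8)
The plan is to read the result off from Lemma~\ref{est: SW<W} together with the classical metrization of narrow convergence by the Wasserstein distance. First, Lemma~\ref{est: SW<W} gives $SW_p^{\gamma}(\mu^n,\mu)\le W_p(\mu^n,\mu)$ for every $n$, so it is enough to prove $W_p(\mu^n,\mu)\to 0$.

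For that step I would invoke the standard characterization of convergence in $W_p$ on a Polish space: since $X$ is a separable Hilbert space it is Polish, and on such a space $W_p(\mu^n,\mu)\to 0$ holds if and only if $\mu^n\to\mu$ narrowly and $\int_X\|x\|^p\,\mu^n(\dd x)\to\int_X\|x\|^p\,\mu(\dd x)$ (the $p$-th moments being taken with base point $0$); see \cite[Theorem~6.9]{villani} or \cite[Ch.~7]{santambrogio}. Both conditions are exactly our hypotheses, so $W_p(\mu^n,\mu)\to 0$, and hence $SW_p^{\gamma}(\mu^n,\mu)\to 0$.

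There is no real obstacle here: essentially all the content sits in Lemma~\ref{est: SW<W} and in the quoted criterion, so the proof is two lines. If one prefers to avoid citing that criterion, an equally short self-contained route is available via dominated convergence over $S$. For each fixed unit vector $\theta$, the map $P_\theta$ is continuous, so $\hat\mu^n_\theta\to\hat\mu_\theta$ narrowly on $\mathbb R$; moreover $\int_{\mathbb R}|y|^p\,\dd\hat\mu^n_\theta(y)=\int_X|\langle\theta,x\rangle|^p\,\dd\mu^n(x)\to\int_X|\langle\theta,x\rangle|^p\,\dd\mu(x)$, because $x\mapsto|\langle\theta,x\rangle|^p$ is continuous and dominated by $\|x\|^p$, whose integrals converge (a standard uniform-integrability upgrade of narrow convergence). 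Hence $W_p(\hat\mu^n_\theta,\hat\mu_\theta)\to 0$ pointwise in $\theta$. Finally, by Theorem~\ref{thm: uni_cont_bound} one has $W_p^p(\hat\mu^n_\theta,\hat\mu_\theta)\le 2^p\bigl(M_p(\mu^n)+M_p(\mu)\bigr)$, and the right-hand side, being a convergent sequence in $n$, is bounded by a constant $C$ independent of $\theta$ and $n$; since the constant function $C$ is $\gamma_S$-integrable on $S$ (as $\gamma_S(S)<\infty$), dominated convergence yields $\int_{\|\theta\|=1}W_p^p(\hat\mu^n_\theta,\hat\mu_\theta)\,\gamma_S(\dd\theta)\to 0$, i.e.\ $SW_p^{\gamma}(\mu^n,\mu)\to 0$.
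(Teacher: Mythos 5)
Your primary argument is exactly the paper's proof: apply Lemma~\ref{est: SW<W} to reduce to $W_p(\mu^n,\mu)\to 0$, which follows from the hypotheses by the standard metrization result (Theorem~6.9 of \cite{villani}). The alternative dominated-convergence route you sketch is also sound, but the paper takes the first, two-line path.
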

\begin{proof}
By Definition 6.8 and Theorem 6.9 in~\cite{villani}, $W_p(\mu^n,\mu)\to 0$. By Lemma~\ref{est: SW<W}, $SW^{\gamma}_p(\mu^n,\mu)\to 0$.
\end{proof}
Now we turn to characterize narrow convergence of measures by the sliced Wasserstein distance. Unlike the finite dimensional case, to have weak convergence, besides the condition that the sliced Wasserstein distance goes to zero, we further require the uniform bound of the $p$-moments. This condition cannot be removed, see Example~\ref{eg: counterexample_narrow_convergence} below for a counterexample.
\begin{theorem}\label{sw: weak topo_w}
If $\mu^n,\mu\in\mathcal P_p(X)$ satisfies $\lim\limits_{n\to\infty}SW_p^{\gamma}(\mu^n,\mu)=0$ and $\sup\limits_{n\geq 1}M_p(\mu_n):=C<\infty$, then $\mu^n$ converges to $\mu$ narrowly.
\end{theorem}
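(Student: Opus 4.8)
The plan is to deduce narrow convergence from two ingredients: (i) tightness of the family $\{\mu^n\}$, which will come from the uniform $p$-moment bound $\sup_n M_p(\mu^n) = C < \infty$ via Prokhorov's theorem (recall $X$ is a separable Hilbert space, hence Polish; a uniform moment bound gives uniform integrability of $\|x\|^p$, but tightness in infinite dimensions needs a bit more — see below), and (ii) identification of every narrow limit point as $\mu$, using the hypothesis $SW_p^\gamma(\mu^n,\mu)\to 0$ together with the characteristic-function argument already used in the proof of Theorem~\ref{prop: sw_distance}. Once both are in hand, a standard subsequence argument finishes: every subsequence of $\{\mu^n\}$ has a further narrowly convergent subsequence, and all such limits equal $\mu$, so the whole sequence converges narrowly to $\mu$.

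First I would establish tightness. The subtlety is that in an infinite-dimensional Hilbert space a uniform bound on $\int\|x\|^p\,d\mu^n$ alone does \emph{not} imply tightness (closed balls are not compact). The natural fix, and I expect this is what the author does, is to exploit that $SW_p^\gamma(\mu^n,\mu)\to 0$ forces $W_p(\hat\mu^n_\theta,\hat\mu_\theta)\to 0$ along a subsequence for $\gamma_S$-a.e.\ $\theta$ (by passing to a subsequence so that the integrand, which is uniformly bounded by Theorem~\ref{thm: uni_cont_bound} with constant $2^p(C+M_p(\mu))$ and equicontinuous, converges; indeed since the integrands are uniformly Lipschitz with a constant controlled by $C$ and $M_p(\mu)$, $L^1(\gamma_S)$-convergence of a subsequence upgrades to uniform convergence on $S$). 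Then for each fixed $\theta$ the one-dimensional marginals $\hat\mu^n_\theta$ converge narrowly (and in $W_p$) to $\hat\mu_\theta$; combined with the uniform $p$-moment bound on each $\mu^n$, one gets control of the tails of all one-dimensional projections simultaneously. Using a fixed orthonormal basis $\{e_i\}$, the projections onto $e_i$ are controlled uniformly, and together with the uniform bound $\sup_n \int \|x\|^p d\mu^n \le C$ — which bounds the "tail energy" $\int (\sum_{i>N}\langle x,e_i\rangle^2)^{p/2} d\mu^n$ after using that this is dominated by $\|x\|^p$ and applying a uniform-integrability / Markov argument — one constructs the compact sets required by Prokhorov. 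This tail-control step is where I expect the real work to lie, and it is plausibly the main obstacle: making the loss of sphere compactness harmless requires genuinely using the moment bound to recover tightness, and one must be careful that the a.e.-in-$\theta$ convergence is leveraged correctly (likely via Fatou/dominated convergence on $S$).

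Second, with tightness established, let $\mu^\infty$ be any narrow limit point, say $\mu^{n_k}\rightharpoonup\mu^\infty$. For any unit vector $\theta$, $P_\theta$ is bounded linear hence narrowly continuous, so $\hat\mu^{n_k}_\theta = P_\theta\#\mu^{n_k}\rightharpoonup P_\theta\#\mu^\infty$ on $\mathbb R$. On the other hand $SW_p^\gamma(\mu^n,\mu)\to 0$ gives, after the subsequence extraction above, $W_p(\hat\mu^{n_k}_\theta,\hat\mu_\theta)\to 0$ for $\gamma_S$-a.e.\ $\theta$, hence $\hat\mu^{n_k}_\theta\rightharpoonup\hat\mu_\theta$ for those $\theta$. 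By uniqueness of narrow limits, $P_\theta\#\mu^\infty = P_\theta\#\mu$ for $\gamma_S$-a.e.\ $\theta$; since $\gamma_S$ is strictly positive and both sides depend continuously on $\theta$ (the characteristic functions $t\mapsto\widehat{P_\theta\#\mu^\infty}(t)$ vary continuously in $\theta$), this extends to \emph{all} unit $\theta$. Then the characteristic-function computation from Theorem~\ref{prop: sw_distance} — writing $f\in X^*=X$ as $\|f\|\langle\,\cdot\,,f/\|f\|\rangle$, so that $\int_X e^{if(x)}\mu^\infty(dx) = \int_{\mathbb R} e^{i\|f\|y}\,\widehat{P_{f/\|f\|}\#\mu^\infty}(dy) = \int_{\mathbb R} e^{i\|f\|y}\,\widehat{P_{f/\|f\|}\#\mu}(dy) = \int_X e^{if(x)}\mu(dx)$ — together with injectivity of characteristic functions on $\mathcal P(X)$ forces $\mu^\infty=\mu$.

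Finally I would assemble the subsequence argument: were $\mu^n$ not narrowly convergent to $\mu$, there would be a bounded continuous $g$ and a subsequence with $|\int g\,d\mu^n - \int g\,d\mu|\ge\delta>0$; by tightness that subsequence has a further narrowly convergent sub-subsequence whose limit, by the previous paragraph, is $\mu$, contradicting the gap. Hence $\mu^n\rightharpoonup\mu$. The two things to be careful about are: (a) the tightness step genuinely needs the full strength of the uniform moment bound, consistent with the author's remark that the hypothesis cannot be dropped (Example~\ref{eg: counterexample_narrow_convergence}); and (b) the passage from "$W_p(\hat\mu^n_\theta,\hat\mu_\theta)\to 0$ in $\gamma_S$-average" to "for $\gamma_S$-a.e.\ $\theta$, along a subsequence" must be done before, not after, extracting the narrow-limit subsequence, so a single subsequence serves both purposes.
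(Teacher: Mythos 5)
Your second half---extracting a subsequence along which $\theta\mapsto W_p^p(\hat\mu^n_\theta,\hat\mu_\theta)$ tends to $0$ for $\gamma_S$-a.e.\ $\theta$, upgrading this to every $\theta$, and then identifying the limit through the characteristic-function computation of Theorem~\ref{prop: sw_distance}---is essentially what the paper does, except that the paper performs the upgrade via the equi-Lipschitz bound of Theorem~\ref{thm: uni_cont_bound} together with full support of $\gamma_S$ (your parenthetical claim of \emph{uniform} convergence on $S$ via equicontinuity would need compactness of $S$, which fails). The structural difference is your first half: the paper never establishes tightness and never invokes Prokhorov. After showing that the characteristic functionals $\int_X e^{i\langle f,x\rangle}\mu^{n_k}(\dd x)$ converge pointwise to those of $\mu$, it concludes narrow convergence directly by citing Proposition~4.6.9 of \cite{bogachev_weak_convergence_measure}.

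This difference matters, because the tightness step you defer to ``the real work'' is a genuine gap that cannot be filled: the hypotheses do not imply uniform tightness in the norm topology. Take $\mu^n=\delta_{e_n}$ and $\mu=\delta_0$ with $\{e_n\}$ an orthonormal basis. Then $M_p(\mu^n)=1$ for all $n$, and $\left(SW_p^{\gamma}(\mu^n,\mu)\right)^p=\frac{1}{\gamma_S(S)}\int_S|\langle\theta,e_n\rangle|^p\,\gamma_S(\dd\theta)\to 0$ by dominated convergence, since $\langle\theta,e_n\rangle\to 0$ for every $\theta$ and $|\langle\theta,e_n\rangle|\le 1$; yet no norm-compact set contains infinitely many of the $e_n$, so $\{\mu^n\}$ is not tight. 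Your proposed tail-energy bound $\int\bigl(\sum_{i>N}\langle x,e_i\rangle^2\bigr)^{p/2}\dd\mu^n\le C$ is uniform in $N$ and therefore gives no decay as $N\to\infty$, which is what Prokhorov would require. (The same example shows that all one-dimensional projections, hence all characteristic functionals, can converge while $\mu^n\not\to\mu$ narrowly in the norm topology; this also puts pressure on the paper's final appeal to Proposition~4.6.9 and on the statement of the theorem itself, and is worth raising with the author.) As written, your proposal does not establish the theorem.
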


\begin{proof}[Proof of Theorem~\ref{sw: weak topo_w}]
We will prove that every subsequence $\{\mu^{n_k}\}_{k\in\mathbb N}$ admits a further subsequence that converges to $\mu$ narrowly. For the simplicity of notation, denote this subsequence as $\{\mu^n\}_{n\in\mathbb N}$, then we still have $\lim\limits_{n\to\infty}SW_p^{\gamma}(\mu^n,\mu)=0$ and $\sup\limits_{n\geq 1}M_p(\mu_n):=C<\infty$.

As
\[
\lim\limits_{n\to\infty}\int_{\|\theta\|=1}W_p^p(\hat\mu_{\theta}^n, \hat\mu_{\theta})\gamma_S(\dd\theta)=\gamma_S(S)\lim\limits_{n\to\infty}\left (SW^{\gamma}_p(\hat\mu_{\theta}^n, \hat\mu_{\theta})\right )^p=0,
\]
then up to a subsequence $\{n_k\}_{k\in\mathbb N}$, the functions $\theta\mapsto W_p^p(\hat\mu_{\theta}^{n_k}, \hat\mu_{\theta})\in\mathbb R^+$ converges to zero for $\gamma_S$ almost every $\theta\in S$, as $k\to\infty$. 

Meanwhile, given that $\sup\limits_{n\geq 1}M_p(\mu_n):=C<\infty$ and $\mu\in \mathcal P_p(X)$, Proposition~\ref{est: pushforward_lip} implies that for $n\geq 1$ the functions $\theta\mapsto W_p^p(\hat\mu^n_{\theta},\hat\mu_{\theta})$ share the same Lipschitz constant
\[
p 2^{p-1}\max\{(M_p(\mu), C\}^{\frac{p-1}{p}}\tacka(M_p(\mu)^{\frac1p}+C^{\frac1p}).
\]
This implies that as $k\to\infty$, the functions $ W_p(\hat\mu_{\theta}^{n_k},\hat\mu_{\theta})\to 0$ for every $\theta\in S$  since $\gamma$ is nondegenerate Gaussian. It follows that for every $\theta\in S$, $\hat\mu^{n_k}_{\theta}$ converges to $\hat\mu_{\theta}$ narrowly. Now for every $f\in X^*=X$,
\begin{equation}
\begin{split}
&\lim\limits_{k\to\infty}\int_{X}\exp(i\langle f,x \rangle) \mu^{n_k}(\dd x)=\lim\limits_{k\to\infty}\int_{X}\exp\left(i\|f\|\langle \frac{f}{\|f\|},x \rangle\right) \mu^{n_k}(\dd x)\\
=&\lim\limits_{k\to\infty}\int_{\mathbb R}\exp(i\|f\|y)\hat\mu_{\frac{f}{\|f\|}}^{n_k}(\dd y)=\int_{\mathbb R}\exp(i\|f\|y)\hat\mu_{\frac{f}{\|f\|}}(\dd y)=\int_{X}\exp(i\langle f,x \rangle) \mu(\dd x).
\end{split}
\end{equation}
By Proposition 4.6.9 of~\cite{bogachev_weak_convergence_measure}, we obtain that $\mu^{n_k}$ converges to $\mu$ narrowly.
\end{proof}

In particular, when the domain $X$ is bounded, the narrow convergence and a decaying sliced Wasserstein distance are equivalent.
\begin{corollary}
    If $\diam(X)<\infty$, then for $\mu_n,\mu\in\mathcal P_p(X), n\in\mathbb N$, $SW^{\gamma}_{p}(\mu_n,\mu)\to 0$ if and only if $\mu_n$ converges to $\mu$ narrowly.
\end{corollary}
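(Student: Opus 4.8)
The plan is to observe that the hypothesis $\diam(X)<\infty$ makes automatic both of the extra side conditions that appear in Theorem \ref{thm: sw_to_0} and Theorem \ref{sw: weak topo_w}, so that the two implications of the corollary follow at once from those theorems. First I would record the elementary fact that, since $0\in X$, finite diameter forces $R:=\sup_{x\in X}\|x\|\le\diam(X)<\infty$; consequently the map $x\mapsto\|x\|^p$ is both continuous and bounded by $R^p$ on $X$, and every $\mu\in\mathcal P(X)$ automatically belongs to $\mathcal P_p(X)$ with $M_p(\mu)=\int_X\|x\|^p\,\dd\mu\le R^p$.

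For the direction ``narrow convergence $\Rightarrow SW_p^{\gamma}\to 0$'': if $\mu_n\to\mu$ narrowly, then testing narrow convergence against the bounded continuous function $x\mapsto\|x\|^p$ gives $M_p(\mu_n)\to M_p(\mu)$. Hence the hypotheses of Theorem \ref{thm: sw_to_0} are met and $SW_p^{\gamma}(\mu_n,\mu)\to 0$. For the converse: if $SW_p^{\gamma}(\mu_n,\mu)\to 0$, then since $M_p(\mu_n)\le R^p$ for every $n$ we have $\sup_{n\ge 1}M_p(\mu_n)\le R^p<\infty$, so the uniform $p$-moment bound required by Theorem \ref{sw: weak topo_w} holds, and that theorem yields narrow convergence of $\mu_n$ to $\mu$.

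There is no genuine obstacle here: the substance is entirely contained in Theorems \ref{thm: sw_to_0} and \ref{sw: weak topo_w}, and the corollary is merely the remark that a bounded domain trivialises their side conditions. The only points that need a line of justification are the bound $\sup_{x\in X}\|x\|\le\diam(X)$ and the observation that $\|\cdot\|^p$ is then a legitimate bounded continuous test function, which is what upgrades narrow convergence to convergence of the $p$-th moments.
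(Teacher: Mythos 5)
Your proof is correct and follows exactly the same route as the paper's: both implications are reduced to Theorems \ref{thm: sw_to_0} and \ref{sw: weak topo_w} by noting that finite diameter makes $x\mapsto\|x\|^p$ a bounded continuous test function and forces the uniform moment bound. Your write-up merely spells out the intermediate bound $\sup_{x\in X}\|x\|\le\diam(X)$, which the paper leaves implicit.
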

\begin{proof}
The condition $\sup\limits_{n\geq 1}M_p(\mu_n)<\infty$ automatically holds, then Theorem~\ref{sw: weak topo_w} gives that $SW^{\gamma}_{p}(\mu_n,\mu)\to 0$ implies $\mu_n$ converges to $\mu$ narrowly. For the other direction, notice that $x\mapsto \|x\|^p$ is now a bounded continuous function, thus Theorem~\ref{thm: sw_to_0} applies.
\end{proof}

\begin{remark}\label{remark: counterexample_narrow_convergence}
If $X=\mathbb R^d$, then the condition $\sup\limits_{n\geq 1}M_p(\mu^n)<\infty$ can be derived from $SW_p^{\gamma}(\mu^n,\mu)\to 0$, see proof of Theorem 2.1 in~\cite{bayraktar2021strong}. However, we emphasize that, for measures on infinite dimensional space we can no longer obtain $\sup\limits_{n\geq 1}M_p(\mu^n)<\infty$ by the convergence in sliced Wasserstein distance. Consider the following example. 
\end{remark}

\begin{example}\label{eg: counterexample_narrow_convergence}
Let $p=2$ and let $\mu:=\delta_0$ and $\mu^n:=\delta_{n^{\frac13}e_n}$, where $\{e_k\}_{k\in\mathbb N}$ is the orthonormal basis of $X$. Recall that for $\theta\in X$, $\sum\limits_{i=1}^{\infty}|\langle \theta,e_n\rangle|^2=\|\theta\|^2$.  Monotone Convergence Theorem gives that
\[
\lim\limits_{n\to\infty}\sum\limits_{i=1}^n\frac{1}{\gamma_S}\int_{S}|\langle \theta, e_n\rangle|^2\,\gamma_S(\dd\theta)=\frac{1}{\gamma_S}\int_S\sum\limits_{i=1}^{\infty}|\langle \theta,e_n\rangle|^2\,\gamma_S(\dd\theta)=\frac{1}{\gamma_S}\int_S 1\,\gamma_S(\dd\theta)=1,
\]
which implies that $\lim\limits_{n\to\infty}\frac{1}{\gamma_S}\int_S |\langle \theta, e_n\rangle|^2\gamma_S(\dd\theta)=0$. Moreover, we know that \[
\frac{1}{\gamma_S}\int_S |\langle \theta, e_n\rangle|^2\gamma_S(\dd\theta)=o(n^{-1}).\]

On the other hand, for every $\theta\in S$, $W_2^2(\hat\mu^n_{\theta},\hat\mu_{\theta})=n^{\frac23}|\langle \theta, e_n\rangle|^2$. Then we obtain that 
\[
SW_{p}^{\gamma}(\mu^n,\mu)=\frac{1}{\gamma_S}\int_S n^{\frac23}|\langle \theta, e_n\rangle|^2\gamma_S(\dd\theta)=o(n^{\frac23-1})\to 0,\quad n\to\infty.
\]
Meanwhile it is obvious that $M_2(\mu^n)=n^{2/3}$, $\sup\limits_{n\geq 1}M_p(\mu)=\infty$. The $2$-moments are not uniformly bounded. Furthermore, $\mu^n$ do not converge to $\mu$ narrowly. 
\end{example}
\section{Approximation via empirical measures} \label{section: empirical}

The estimate of the distance between empirical measures and its true distribution is a prevailing problem. In this section we investigate the convergence rate of empirical measures on infinite dimensional Hilbert space under the sliced Wasserstein distance~\eqref{def: sw_infinite}; in particular, we have the below theorem: 
\begin{theorem} \label{thm: empirical}
If $\mu\in\mathcal P_s(X)$ for $s>2p$, $\mu^n:=\frac1n\sum\limits_{k=1}^n\delta_{X_k}$ with $X_1,...,X_n$ a sample drawn from $\mu$, then
\begin{equation}\label{conv_rate: empirical}
\E SW^{\gamma}_p (\mu^n,\mu)\leq C n^{-\frac{1}{2p}},
\end{equation}
where the constant $C$ is determined by $p,s$ and $M_s(\mu)$.
\end{theorem}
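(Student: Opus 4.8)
The plan is to reduce the infinite-dimensional bound to the one-dimensional rate for the Wasserstein distance between an empirical measure on $\mathbb{R}$ and its target, applied slicewise, and then integrate over $S$ against $\gamma_S$. First I would observe that for each unit vector $\theta$, the pushforward $\hat\mu^n_\theta = P_\theta \# \mu^n$ is precisely the empirical measure $\frac1n\sum_{k=1}^n \delta_{P_\theta(X_k)}$ built from the i.i.d.\ real samples $P_\theta(X_1),\dots,P_\theta(X_n)$ drawn from $\hat\mu_\theta \in \mathcal{P}_s(\mathbb{R})$; the $s$-th moment of $\hat\mu_\theta$ is controlled by $M_s(\mu)$ uniformly in $\theta$ by the change-of-variables estimate \eqref{est: p moment} (with $s$ in place of $p$). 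Then I would invoke the known sharp rate for the empirical approximation in one dimension under $W_p$: if $\rho\in\mathcal{P}_s(\mathbb{R})$ with $s>2p$ and $\rho^n$ is its $n$-sample empirical measure, then $\E\, W_p^p(\rho^n,\rho)\le C(p,s)\, M_s(\rho)^{p/s}\, n^{-1/2}$ (this is exactly the $d=1$ case of the results cited as \cite{Manole_2022}, or can be derived from the explicit $W_p^p$ formula via quantile functions together with a moment bound on $\sup_t |F_{\rho^n}^{-1}(t)-F_\rho^{-1}(t)|$; the threshold $s>2p$ is what makes the relevant integral of the empirical CDF fluctuations finite).

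Next I would apply Fubini–Tonelli to write
\begin{equation*}
\E\, SW_p^{\gamma,p}(\mu^n,\mu) = \frac{1}{\gamma_S(S)}\int_{\|\theta\|=1} \E\, W_p^p(\hat\mu^n_\theta,\hat\mu_\theta)\,\gamma_S(\dd\theta),
\end{equation*}
which is justified because the integrand is nonnegative and measurable (jointly measurable in $(\theta,\omega)$ since $\theta\mapsto W_p(\hat\mu^n_\theta,\hat\mu_\theta)$ is Lipschitz by Lemma \ref{est: pushforward_lip}, hence continuous, and measurable in the samples). Plugging in the one-dimensional bound uniformly in $\theta$ gives
\begin{equation*}
\E\, SW_p^{\gamma,p}(\mu^n,\mu) \le \frac{1}{\gamma_S(S)}\int_{\|\theta\|=1} C(p,s)\, M_s(\mu)^{p/s}\, n^{-1/2}\,\gamma_S(\dd\theta) = C(p,s)\, M_s(\mu)^{p/s}\, n^{-1/2},
\end{equation*}
since $\gamma_S(S)<\infty$ cancels. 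Finally, from $\E\, SW_p^{\gamma,p}(\mu^n,\mu)\le C' n^{-1/2}$ I would pass to $\E\, SW_p^\gamma(\mu^n,\mu)$ by Jensen's inequality applied to the concave map $t\mapsto t^{1/p}$ on $[0,\infty)$: $\E\, SW_p^\gamma(\mu^n,\mu) = \E\big(SW_p^{\gamma,p}(\mu^n,\mu)\big)^{1/p} \le \big(\E\, SW_p^{\gamma,p}(\mu^n,\mu)\big)^{1/p} \le (C')^{1/p} n^{-1/(2p)}$, which is the claimed \eqref{conv_rate: empirical} with $C = (C')^{1/p}$ depending only on $p$, $s$, and $M_s(\mu)$.

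The main obstacle, and the step deserving the most care, is the uniform-in-$\theta$ one-dimensional estimate: one must be sure the constant in $\E\, W_p^p(\hat\mu^n_\theta,\hat\mu_\theta)\le C(p,s)\,M_s(\hat\mu_\theta)^{p/s} n^{-1/2}$ depends on $\hat\mu_\theta$ only through its $s$-th moment (not through, say, the density or support), so that the bound $M_s(\hat\mu_\theta)\le M_s(\mu)$ can be used across all slices. This is available from the quantile-function representation $W_p^p(\hat\mu^n_\theta,\hat\mu_\theta)=\int_0^1 |F^{-1}_{\hat\mu^n_\theta}(t)-F^{-1}_{\hat\mu_\theta}(t)|^p\,\dd t$ combined with the Fournier–Guillin / Bobkov–Ledoux type one-dimensional bounds, which are purely moment-based; I would cite \cite{Manole_2022} (and \cite{fournier2013rate}) for this. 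A secondary point worth a sentence is confirming joint measurability so that Fubini applies, which follows from the continuity of $\theta\mapsto W_p(\hat\mu^n_\theta,\hat\mu_\theta)$ noted above together with measurability in the sample.
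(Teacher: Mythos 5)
Your proposal is correct and follows essentially the same route as the paper: reduce to the one-dimensional empirical rate applied slicewise with a constant depending only on the $s$-th moment (so that $M_s(\hat\mu_\theta)\leq M_s(\mu)$ makes it uniform in $\theta$), integrate over $S$ by Tonelli, and finish with Jensen's inequality for $t\mapsto t^{1/p}$. The only difference is that where you cite the one-dimensional bound as a known moment-based result, the paper derives it explicitly from Theorem 7.16 of Bobkov--Ledoux together with the Chebyshev estimate $F(x)(1-F(x))\leq (1+\E|\xi|^s)/(1+|x|^s)$, with the hypothesis $s>2p$ entering exactly where you say it should, to make $\int_1^\infty x^{p-1-s/2}\,\dd x$ finite.
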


Theorem~\ref{thm: empirical} immediately gives a concentration inequality of $SW$ using Markov inequality. That is, for any $t>0$,
    \begin{equation}
        \mathbb{P}(SW^{\gamma}_p (\mu^n,\mu)\geq t)\leq C t^{-1}n^{-\frac{1}{2p}}.
    \end{equation}
The proof of Theorem~\ref{thm: empirical} relies on the following result on estimates of one dimensional empirical measure~\cite{bobkov_sergey}.
\begin{theorem}[Theorem 7.16 of~\cite{bobkov_sergey}] \label{thm: empirical_in_1d}Let $X_1,...,X_n$ be an sample drawn from a Borel probability measure $\mu$ on $\mathbb R$ with distribution functions
$F$. Let $\mu^n:=\frac{1}{n}\sum\limits_{k=1}^n\delta_{X_k}$ be the empirical measure. For all $p\geq 1$,
\begin{equation}\label{est: w_p_empirical_1d}
\E W_p^p(\mu^n,\mu)\leq \frac{p2^{p-1}}{\sqrt{n}}\int_{-\infty}^{\infty}|x|^{p-1}\sqrt{F(x)(1-F(x))}\dd x.
\end{equation}
\end{theorem}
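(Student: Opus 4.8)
The plan is to reduce the estimate to a purely deterministic inequality between $W_p^p$ and a weighted $x$-integral of the difference of distribution functions, and then to insert the elementary binomial variance bound for the empirical distribution function. Writing $F$ and $G$ for the distribution functions of two measures $\mu,\nu\in\mathcal P_p(\R)$ and using the one-dimensional optimal transport identity $W_p^p(\mu,\nu)=\int_0^1|F^{-1}(t)-G^{-1}(t)|^p\,\dd t$ (optimality of the monotone quantile coupling), the first and main goal is the deterministic bound
\begin{equation}\label{plan: det_ineq}
W_p^p(\mu,\nu)\leq p\,2^{p-1}\int_{-\infty}^{\infty}|x|^{p-1}\,|F(x)-G(x)|\,\dd x.
\end{equation}
Once \eqref{plan: det_ineq} is available, applying it with $\nu=\mu^n$ and $G=F_n$ the empirical distribution function, taking expectations, and bounding $\E|F_n(x)-F(x)|$ will give the theorem directly.

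The core of \eqref{plan: det_ineq} is the pointwise inequality
\begin{equation}\label{plan: pointwise}
|a-b|^p\leq p\,2^{p-1}\int_{\min(a,b)}^{\max(a,b)}|x|^{p-1}\,\dd x,\qquad a,b\in\R,
\end{equation}
which I would verify by a short case analysis. When $a,b$ have the same sign the right-hand integral equals $\big|\,|a|^p-|b|^p\,\big|/p$, and superadditivity of $t\mapsto t^p$ on $[0,\infty)$ gives $|a-b|^p\leq\big|\,|a|^p-|b|^p\,\big|$; when they have opposite signs the integral equals $(|a|^p+|b|^p)/p$ and one invokes $(|a|+|b|)^p\leq 2^{p-1}(|a|^p+|b|^p)$. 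Since $2^{p-1}\geq 1$, the factor covers both cases. Rewriting the integral in \eqref{plan: pointwise} as $\int_{\R}|x|^{p-1}\1[x\text{ lies between }a,b]\,\dd x$, substituting $a=F^{-1}(t)$, $b=G^{-1}(t)$, integrating in $t$, and applying Tonelli to exchange the order of integration, the inner integral becomes $\int_0^1\1[x\text{ lies between }F^{-1}(t),G^{-1}(t)]\,\dd t$. The quantile--distribution duality $\{F^{-1}(t)\leq x\}=\{t\leq F(x)\}$ (valid up to a $t$-null set) then identifies this with $\int_0^1\big|\1[t\leq F(x)]-\1[t\leq G(x)]\big|\,\dd t=|F(x)-G(x)|$, yielding \eqref{plan: det_ineq}.

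For the probabilistic step I would take $G=F_n$ with $F_n(x)=\frac1n\sum_{k=1}^n\1[X_k\leq x]$, so that $nF_n(x)$ is $\mathrm{Binomial}(n,F(x))$ with mean $nF(x)$ and variance $nF(x)(1-F(x))$; Jensen's inequality then gives
\begin{equation}\label{plan: binom}
\E|F_n(x)-F(x)|\leq\big(\E|F_n(x)-F(x)|^2\big)^{1/2}=\sqrt{\frac{F(x)(1-F(x))}{n}}.
\end{equation}
Because the integrand in \eqref{plan: det_ineq} is nonnegative, Tonelli permits exchanging $\E$ with $\int\,\dd x$; combining \eqref{plan: det_ineq} and \eqref{plan: binom} yields $\E W_p^p(\mu^n,\mu)\leq \frac{p\,2^{p-1}}{\sqrt n}\int_{-\infty}^{\infty}|x|^{p-1}\sqrt{F(x)(1-F(x))}\,\dd x$, as claimed. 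I expect the main obstacle to be the deterministic inequality \eqref{plan: det_ineq}, and within it the bookkeeping in \eqref{plan: pointwise} needed to reproduce the exact constant $p\,2^{p-1}$ together with the weight $|x|^{p-1}$; the binomial bound \eqref{plan: binom} and the Fubini--Tonelli exchanges are routine.
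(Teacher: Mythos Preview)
The paper does not prove this statement at all: Theorem~\ref{thm: empirical_in_1d} is quoted verbatim from \cite{bobkov_sergey} and used as a black box in the proof of Theorem~\ref{thm: empirical}. So there is no ``paper's own proof'' to compare against.

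That said, your argument is correct and is essentially the standard route (and the one in the cited reference). The deterministic bound \eqref{plan: det_ineq} via the pointwise inequality \eqref{plan: pointwise}, followed by the Tonelli swap and the quantile--CDF duality, is exactly how Bobkov--Ledoux obtain the $|x|^{p-1}$-weighted integral of $|F-G|$; the binomial variance bound \eqref{plan: binom} is then immediate. One minor bookkeeping point: when you pass from $\int_{\min(a,b)}^{\max(a,b)}|x|^{p-1}\,\dd x$ to $\int_{\R}|x|^{p-1}\,\big|\1[a\le x]-\1[b\le x]\big|\,\dd x$ you are silently replacing a closed interval by a half-open one, but since the difference is a single point this is harmless for the Lebesgue integral in $x$; it is worth saying so explicitly so the Tonelli step that produces $|F(x)-G(x)|$ goes through cleanly.
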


% For $t\in\mathbb R$, by change of variables,
%\begin{equation}
%\begin{split}
%F_{\theta}(t)&=\int_{-\infty}^t1\dd\hat\mu_{\theta}=\int_{\langle y,\theta\rangle<t}1\dd\mu(y)=\mu(\{y\in X\:|\:\langle y,\theta\rangle<t \})
%\end{split}
%\end{equation}

Then we bound the right hand side of~\eqref{est: w_p_empirical_1d} by a simple observation. Let $s\geq 1$. Let $\xi$ be a random variable on $\mathbb R$ with distribution function $F$. Assume further $\E|\xi|^s<\infty$. By Chebyshev's inequality, for $x\geq 0$,
\begin{equation*}
\begin{split}
&\left (F(x)(1-F(x)) \right)\tacka (1+|x|^{s})=\left (F(x)(1-F(x)) \right)+ \left (F(x)(1-F(x)) \right) |x|^{s}\\
\leq & 1+ (1-F(x))|x|^{s}\leq 1+\E |\xi|^{s},
\end{split}
\end{equation*}
which implies that for every $x\geq 0$, $F(x)(1-F(x))\leq \frac{1+\E |\xi|^{s}}{1+|x|^{s}}$. The same inequality holds for $x\leq 0$. Thus we have
\begin{equation}\label{est: F(1-F)}
    F(x)(1-F(x))\leq \frac{1+\E |\xi|^{s}}{1+|x|^{s}},\quad \forall x\in\mathbb R.
\end{equation}

The above discussion leads to the following proof.
\begin{proof}[Proof of Theorem~\ref{thm: empirical}]
Notice that for $\theta\in S$, $\langle \theta, X_1\rangle,...,\langle \theta, X_n\rangle$ is a sample drawn from $\hat\mu_{\theta}$ and $(\hat\mu_{\theta})^{n}=\frac1n\sum\limits_{k=1}^n\delta_{\langle \theta, X_k\rangle}$. Let $F_{\theta}$ denote the distribution function of $\hat\mu_{\theta}$ and $X_{\theta}\sim \hat\mu_{\theta}$ . Applying Theorem~\ref{thm: empirical_in_1d}, we obtain
\begin{align*}
\MoveEqLeft\E W_p^p((\hat\mu_{\theta})^{n},\hat\mu_{\theta})\leq \frac{p2^{p-1}}{\sqrt{n}}\int_{-\infty}^{\infty}|x|^{p-1}\sqrt{F_{\theta}(x)(1-F_{\theta}(x))}\dd x\\
&\overset{\mathclap{\eqref{est: F(1-F)}}}{\leq}\frac{p2^{p-1}}{\sqrt{n}}\int_{-\infty}^{\infty}|x|^{p-1}\left(\frac{1+\E |X_{\theta}|^s}{1+|x|^{s}}\right)^{\frac12}\dd x\overset{\eqref{est: p moment}}{\leq}\frac{p2^{p}}{\sqrt{n}}(1+M_s(\mu))^{\frac12}\int_{0}^{\infty}\frac{|x|^{p-1}}{(1+|x|^{s})^{\frac12}}\dd x\\
&\leq\frac{p2^{p}}{\sqrt{n}}(1+M_s(\mu))\left (1+\int_{1}^{\infty}|x|^{p-1-\frac s2}\dd x    \right )=\frac{p2^{p}}{\sqrt{n}}(1+M_s(\mu))^{\frac12}\left(1+\frac{1}{p-\frac{s}{2}}\right).
\end{align*}
Tonelli's theorem gives that
$\E (SW_p^{\gamma}(\mu^n,\mu))^p\leq \frac{p2^{p}}{\sqrt{n}}(1+M_s(\mu))^{\frac12}\left(1+\frac{1}{p-\frac{s}{2}}\right),$
which by Jensen's inequality implies
\eqref{conv_rate: empirical}.
\end{proof}

We then provide the following straightforward corollary estimating the sliced Wasserstein distance between two unknown measures, whose proof only uses triangle inequality.
\begin{corollary}
If $\mu,\nu\in\mathcal P_s(X)$ for $s>2p$, $\mu^n:=\frac1n\sum\limits_{k=1}^n\delta_{X_k}$, $\nu^m:=\frac1m\sum\limits_{k=1}^m\delta_{Y_k}$ with $X_1,...,X_n$ a sample drawn from $\mu$, and $Y_1,...,Y_m$ a sample drawn from $\nu$, then
\[
\E |SW^{\gamma}_p (\mu^n,\nu^m)-SW^{\gamma}_p (\mu,\nu)|\leq C \left(n^{-\frac{1}{2p}}+m^{-\frac{1}{2p}}\right),
\]
where the constant $C$ is determined by $p,s$ and $M_s(\mu),M_s(\nu)$.
\end{corollary}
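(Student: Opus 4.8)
The plan is to reduce everything to the triangle inequality for $SW_p^{\gamma}$ (established in Theorem \ref{prop: sw_distance}) together with the one-sample rate of Theorem \ref{thm: empirical}. First I would observe that, since $SW_p^{\gamma}$ is a genuine metric on $\mathcal P_p(X)$, applying the triangle inequality twice gives
\[
\bigl|SW_p^{\gamma}(\mu^n,\nu^m)-SW_p^{\gamma}(\mu,\nu)\bigr|\leq SW_p^{\gamma}(\mu^n,\mu)+SW_p^{\gamma}(\nu^m,\nu),
\]
which holds pointwise (for every realization of the two samples), so no independence between the $X_i$'s and the $Y_j$'s is needed.

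Next I would take expectations of both sides and use monotonicity and linearity of the expectation to get
\[
\E\bigl|SW_p^{\gamma}(\mu^n,\nu^m)-SW_p^{\gamma}(\mu,\nu)\bigr|\leq \E\, SW_p^{\gamma}(\mu^n,\mu)+\E\, SW_p^{\gamma}(\nu^m,\nu).
\]
Since $\mu\in\mathcal P_s(X)$ and $\nu\in\mathcal P_s(X)$ with $s>2p$, and since $\mu^n$, $\nu^m$ are empirical measures of samples drawn from $\mu$ and $\nu$ respectively, Theorem \ref{thm: empirical} applies to each term separately: there is a constant $C_1$ depending only on $p,s,M_s(\mu)$ with $\E\,SW_p^{\gamma}(\mu^n,\mu)\leq C_1 n^{-1/(2p)}$, and likewise a constant $C_2$ depending only on $p,s,M_s(\nu)$ with $\E\,SW_p^{\gamma}(\nu^m,\nu)\leq C_2 m^{-1/(2p)}$.

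Combining the two displays and setting $C:=\max\{C_1,C_2\}$ (or simply $C_1+C_2$) yields the claimed bound $\E|SW_p^{\gamma}(\mu^n,\nu^m)-SW_p^{\gamma}(\mu,\nu)|\leq C(n^{-1/(2p)}+m^{-1/(2p)})$, with $C$ determined by $p,s,M_s(\mu),M_s(\nu)$. There is essentially no analytic obstacle here: the only point worth a remark is that the estimate is entirely pathwise before taking expectations, so the argument does not require the two samples to be independent of one another, and the whole content has already been done in Theorem \ref{thm: empirical} and Theorem \ref{prop: sw_distance}.
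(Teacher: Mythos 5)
Your proof is correct and follows exactly the route the paper indicates: the paper itself notes the corollary's "proof only uses triangle inequality," i.e.\ the double triangle inequality for the metric $SW_p^{\gamma}$ followed by Theorem \ref{thm: empirical} applied to each empirical term. Your added remark that the bound is pathwise and needs no independence between the two samples is a fair observation but does not change the argument.
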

\begin{remark}\label{remark: empirical}
The above results are consistent with that in~\cite{Manole_2022} where the convergence rate of $SW$ for measures on $\mathbb R^d$ does not depend on the dimension $d$. The reason is that the projection induces the problem to the uniform estimation of Wasserstein distance in one dimension. 
\end{remark}
\subsection{Comparison to quantization in Wasserstein metric} \label{subsec: comparison_Wasserstein}
In this subsection we display some results of the convergence rate of empirical measure under Wasserstein distance for measures defined both on finite and infinite dimensional spaces~\cite{fournier2013rate,10.3150/19-BEJ1151} in comparison with our results in Section~\ref{section: empirical}.

For measures on finite dimensional spaces, Wasserstein distance suffers from curse of dimensions. To be specific, we refer to the results in~\cite{fournier2013rate}, which read
\begin{theorem}[Theorem 1 in~\cite{fournier2013rate}] Let $\mu\in\mathcal P(\mathbb R^d)$ and $p>0$. Assume $M_q(\mu)<\infty$ for some $q>p$. For $n\geq 1$, let $\mu^n:=\frac1n\sum\limits_{k=1}^n\delta_{X_k}$ with $X_1,...,X_n$ a sample drawn from $\mu$.There exists a constant $C$ depending only on $p,q,d$ such that, for all $n\geq 1$, 
\[
\E W_p^p(\mu^n,\mu)\leq CM_p^{p/q}(\mu)\left\{\begin{array}{ll}
    n^{-\frac12} + n^{-\frac{(q-p)}{q}}& \text{if\:} p>d/2 \text{\:and\:} q\neq 2p \\
    n^{-1/2}\log(1+n)+n^{-\frac{(q-p)}{q}} &  \text{if\:}p>d/2 \text{\:and\:} q\neq 2p\\
     n^{-p/d}+n^{-\frac{(q-p)}{q}}& \text{if\:}0<p<\frac d2\text{\:and\:} q\neq \frac{d}{d-p}.
\end{array}\right.
\]
\end{theorem}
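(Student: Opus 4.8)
This is the classical Fournier--Guillin rate estimate for empirical measures in Wasserstein distance; the plan is to recall the standard proof, which proceeds by a multiscale (dyadic) decomposition of $\mathbb R^d$. First localize: write $\mathbb R^d=B_0\cup\bigcup_{\ell\ge 1}A_\ell$, where $B_0=\{|x|\le 1\}$ and $A_\ell=\{2^{\ell-1}\le|x|<2^{\ell}\}$. By Markov's inequality $\mu(A_\ell)\le M_q(\mu)\,2^{-(\ell-1)q}$, so the masses of the far annuli decay like $2^{-\ell q}$; played against the growth $2^{\ell p}$ of the $p$-th power of the diameter of $A_\ell$, this decay is exactly what will generate the term $n^{-(q-p)/q}$, while the moment prefactor in the statement is recovered at the end by rescaling $\mu$ and tracking moments through the truncation.

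Next, the core estimate. On a bounded set $R$ of diameter $D$, subdivide $R$ into dyadic cubes $\mathcal Q_k$ of side $\asymp D2^{-k}$ with $\#\mathcal Q_k\lesssim 2^{kd}$, and build a transport plan between $\mu|_R$ and $\mu^n|_R$ by matching masses hierarchically from the coarsest to the finest scale; telescoping the matching costs yields
\begin{equation*}
W_p^p(\mu|_R,\mu^n|_R)\ \le\ C_{p,d}\,D^p\sum_{k\ge 0}2^{-kp}\sum_{Q\in\mathcal Q_k}\bigl|\mu(Q)-\mu^n(Q)\bigr|.
\end{equation*}
Since $n\mu^n(Q)$ is binomial with parameters $(n,\mu(Q))$, one has $\E|\mu(Q)-\mu^n(Q)|\le\min\{2\mu(Q),\sqrt{\mu(Q)/n}\}$; summing over $Q\in\mathcal Q_k$ via Cauchy--Schwarz and $\sum_{Q}\mu(Q)\le 1$ gives $\E\sum_{Q\in\mathcal Q_k}|\mu(Q)-\mu^n(Q)|\le\min\{2,\,2^{kd/2}/\sqrt n\}$, hence
\begin{equation*}
\E\, W_p^p(\mu|_R,\mu^n|_R)\ \lesssim\ D^p\sum_{k\ge 0}2^{-kp}\min\bigl\{1,\,2^{kd/2}/\sqrt n\bigr\}.
\end{equation*}
The two terms of the minimum cross over at $2^k\asymp n^{1/d}$, and the resulting geometric series is, up to constants, $n^{-1/2}$ if $p>d/2$, $n^{-1/2}\log(1+n)$ if $p=d/2$, and $n^{-p/d}$ if $p<d/2$. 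Taking $R=B_0$, $D\asymp 1$, this is the ``bulk'' contribution.

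Finally, reassemble. Apply the previous step to each annulus $A_\ell$ with $D\asymp 2^{\ell}$, but when the mass $m_\ell:=\mu(A_\ell)$ is small use instead the crude bound $W_p^p(\mu|_{A_\ell},\mu^n|_{A_\ell})\le 2^{\ell p}\bigl(m_\ell+\mu^n(A_\ell)\bigr)$ coming from the diameter of $A_\ell$; for each $\ell$ take the better of the two estimates and sum over $\ell$. Optimizing the level $\ell_0$ at which the two regimes balance (roughly $2^{\ell_0 q}\asymp M_q(\mu)\,n$) produces the additional term, a constant times a power of a moment of $\mu$ times $n^{-(q-p)/q}$, and adding it to the bulk contribution gives the stated trichotomy; the exceptional relations ($q=2p$ when $p>d/2$, and the analogous equality of exponents in the range $p<d/2$) are precisely the borderline cases in which the two competing geometric sums have equal growth and a logarithmic factor appears.

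The main obstacle is the telescoping coupling bound above: one has to construct the hierarchical transport plan explicitly and check that its total cost really collapses to $\sum_k 2^{-kp}\sum_Q|\mu(Q)-\mu^n(Q)|$ with a constant depending only on $p$ and $d$, so that the dimension enters the estimate only through the cardinalities $\#\mathcal Q_k\asymp 2^{kd}$. Everything else is moment bookkeeping for the binomial counts together with the summation of geometric series with a cutoff, and the case distinctions $p$ versus $d/2$ and $q$ versus $2p$ are exactly the points where one or the other of the two geometric sums dominates.
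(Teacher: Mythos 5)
This statement is not proved in the paper at all: it is quoted from Fournier--Guillin \cite{fournier2013rate} purely as background for the comparison in Subsection \ref{subsec: comparison_Wasserstein}, so there is no in-paper argument to measure your proposal against. What you have written is a faithful outline of the original Fournier--Guillin proof -- the annulus decomposition with Markov's inequality producing the $n^{-(q-p)/q}$ tail term, the dyadic-cube hierarchy with the binomial bound $\E|\mu(Q)-\mu^n(Q)|\le\min\{2\mu(Q),\sqrt{\mu(Q)/n}\}$, Cauchy--Schwarz over the $\asymp 2^{kd}$ cubes per level, and the crossover of the geometric series at $2^k\asymp n^{1/d}$ giving the trichotomy in $p$ versus $d/2$. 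Two caveats. First, as you yourself flag, the central coupling lemma bounding $W_p^p$ by the weighted sum $\sum_k 2^{-kp}\sum_Q|\mu(Q)-\mu^n(Q)|$ is asserted, not proved; moreover your ``reassemble'' step silently treats $W_p(\mu|_{A_\ell},\mu^n|_{A_\ell})$ between restrictions of unequal total mass, and the bookkeeping needed to glue the per-annulus transports into a genuine plan between $\mu$ and $\mu^n$ (Fournier--Guillin do this via a distance-like functional $\mathcal D_p$ dominating $W_p^p$) is exactly where the remaining work lies. So the proposal is the right approach but is a sketch, not a proof. Second, note that the statement as reproduced in the paper contains transcription errors: the second case should read $p=d/2$ (not a repeat of $p>d/2$), and the prefactor should be $M_q^{p/q}(\mu)$ rather than $M_p^{p/q}(\mu)$; your case analysis implicitly corrects the first of these.
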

It follows that if $p$ is fixed and when $d$ is large, the dominant term in the convergence rate will be $n^{-p/d}$ approaching $1$.

On the other hand, for measures on infinite dimensional spaces,~\cite{10.3150/19-BEJ1151} studied the convergence rate of Wasserstein distance between certain class of infinite dimensional measures and their empirical measures. We will state their results here. The probability measures are defined on a Hilbert space $\mathcal X=L^2=\{x\in\mathbb R^{\infty}:\sum\limits_{m=1}^{\infty}x_m^2<\infty\}$. 
\begin{theorem}[Theorem 4.1 in~\cite{10.3150/19-BEJ1151}, Polynomial Decay]
Define the distribution class
\[
\mathcal P_{poly}(q,b,M_q):=\left\{\:\mu:\: \E_{X\sim\mu}\big[ \sum\limits_{m=1}^{\infty}(m^bX_m)^2\big]^{\frac q2}\leq M_q^q\right\}.
\]
If $p, q, b$ are constants such that $1 \leq p < q$ and $b > \frac12$, then there exist
positive constants $\underline{c}_{p,q,b}$, $\bar{c}_{p,q,b}$ depending on $(p,q,b)$ such that
\[
\underline{c}_{p,q,b} M_q(\log n)^{-b}\leq \sup\limits_{\mu\in\mathcal P_{poly}(q,b,M_q)}\E W_p(\mu^n,\mu)\leq \bar{c}_{p,q,b}M_q(\log n)^{-b}.
\]
\end{theorem}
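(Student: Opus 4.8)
The plan is to prove the upper and lower bounds separately; both reduce to the metric geometry of the $\ell^2$-ellipsoid $E_\Lambda:=\{x\in L^2:\sum_m m^{2b}x_m^2\le\Lambda^2\}$, which is compact precisely because $b>\tfrac12$ forces $\sum_m m^{-2b}<\infty$, and whose semi-axes $\Lambda m^{-b}$ decay polynomially. The crucial input is the covering-number estimate $\log N(E_\Lambda,\varepsilon)\asymp_b(\Lambda/\varepsilon)^{1/b}$: a coordinatewise box covering gives $\log N(E_\Lambda,\varepsilon)\lesssim\sum_{m\le(\Lambda/\varepsilon)^{1/b}}\log(3\Lambda m^{-b}/\varepsilon)$, and here the $\log m$ contributions cancel, via Stirling, against the factor $-b\log\big((\lfloor(\Lambda/\varepsilon)^{1/b}\rfloor)!\big)$ coming from the $-b\log m$ terms, leaving a clean power $(\Lambda/\varepsilon)^{1/b}$; the matching lower bound for $N$ follows from a volume estimate. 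This cancellation of logarithmic factors is exactly what produces the sharp exponent $b$ rather than a $(\log n/\log\log n)^{-b}$ rate.

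From this one gets the \emph{core estimate}: for a Borel probability $\rho$ supported on $E_\Lambda$ with empirical measure $\rho^m$ from $m$ i.i.d.\ samples, $\E W_p(\rho^m,\rho)\lesssim_b\Lambda(\log m)^{-b}$. Partition $E_\Lambda$ into $N=N(E_\Lambda,\varepsilon)$ cells of diameter $\le2\varepsilon$ and collapse each cell to a representative point; transporting $\rho$, resp.\ $\rho^m$, onto the collapsed measure costs $\le2\varepsilon$ in $W_p$, matching the two collapsed measures costs $\le\diam(E_\Lambda)^p\sum_i|\rho^m(C_i)-\rho(C_i)|$, and $\E\sum_i|\rho^m(C_i)-\rho(C_i)|\le\sqrt{N/m}$ by Cauchy--Schwarz. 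Hence $\E W_p(\rho^m,\rho)\lesssim\varepsilon+\Lambda\big(N(E_\Lambda,\varepsilon)/m\big)^{1/(2p)}$, and taking $\varepsilon\asymp_b\Lambda(\log m)^{-b}$ makes $\log N(E_\Lambda,\varepsilon)\le\tfrac12\log m$, so the second term is $\le\Lambda m^{-1/(4p)}$ and the estimate follows.

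For the \textbf{upper bound}, fix $\mu\in\mathcal P_{poly}(q,b,M_q)$ and peel off the dyadic radial shells $A_0:=E_{M_q}$ and $A_j:=E_{2^jM_q}\setminus E_{2^{j-1}M_q}$ for $j\ge1$. Since $\|x\|\le\big(\sum_m m^{2b}x_m^2\big)^{1/2}$ and $\E\big[\sum_m m^{2b}X_m^2\big]^{q/2}\le M_q^q$, Chebyshev gives $\mu(A_j)\le2^{-(j-1)q}$. Then $W_p(\mu^n,\mu)$ is bounded by a gluing argument: match the restriction of $\mu^n$ to each $A_j$ against that of $\mu$, and separately transport the residual inter-shell mass imbalance. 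The $j$-th within-shell term is a sub-probability of mass $\le2^{-(j-1)q}$ supported on $E_{2^jM_q}$ and seen through $\approx n2^{-jq}$ samples, so rescaling the core estimate (with $\Lambda=2^jM_q$) by the mass bounds it by $\lesssim2^{-j(q/p-1)}M_q(\log n)^{-b}$; as $q>p$ these are summable in $j$ and total $\lesssim_{p,q,b}M_q(\log n)^{-b}$. The inter-shell imbalance is controlled similarly, handling separately the shells that do and do not typically carry samples and again using $q>p$ and the $q$-th moment to make the far shells negligible.

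For the \textbf{lower bound}, put $K=K_n:=\lfloor c_0\log n\rfloor$ with $c_0>0$ a small constant depending on $b$, and let $\mu$ be the uniform probability on the sphere $\{x\in\span\{e_{K+1},\dots,e_{2K}\}:\|x\|=\rho\}$ with $\rho:=\tfrac12M_q(2K)^{-b}$; then $\sum_m m^{2b}x_m^2\le(2K)^{2b}\rho^2\le M_q^2$ on $\supp\mu$, so $\mu\in\mathcal P_{poly}(q,b,M_q)$. A cap of radius $r$ on this $(K-1)$-dimensional sphere has $\mu$-measure $\le(r/\rho)^{K-1}$, so with $r:=\rho(4n)^{-1/(K-1)}$ the union of the caps about the $n$ atoms of $\mu^n$ has measure $\le\tfrac14$; since any coupling must move the remaining mass (of total $\ge\tfrac34$) a distance exceeding $r$, we get $W_p^p(\mu^n,\mu)\ge\tfrac34 r^p$, hence $\E W_p(\mu^n,\mu)\ge(\tfrac34)^{1/p}\rho(4n)^{-1/(K-1)}$. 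For $K\asymp c_0\log n$ the factor $(4n)^{-1/(K-1)}$ is bounded below by a constant while $\rho\asymp_{b,c_0}M_q(\log n)^{-b}$, which gives the lower bound. The main obstacle is the \textbf{upper bound}: making the shell-by-shell gluing rigorous --- in particular bounding the inter-shell mass-imbalance cost, where the hypothesis $q>p$ is used in a non-obvious way, and checking that $\log m$ may be replaced by $\log n$ uniformly over the sample-carrying shells --- together with tracking the dependence of the covering-number constant on $b$; the metric-entropy cancellation and the lower-bound construction are comparatively routine.
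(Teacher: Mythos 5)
First, a point of order: the paper does not prove this statement. It is quoted verbatim (as Theorem 4.1 of the cited reference \cite{10.3150/19-BEJ1151}) purely for comparison with the empirical-measure rate in Theorem \ref{thm: empirical}; there is no proof in the paper to compare yours against. Judged on its own terms, your strategy is the standard one for results of this type (metric entropy of the Sobolev ellipsoid, a single-scale partition bound, a dyadic shell decomposition \`a la Fournier--Guillin, and a volumetric/cap-counting lower bound), and the pieces you do execute --- the entropy estimate $\log N(E_\Lambda,\varepsilon)\asymp(\Lambda/\varepsilon)^{1/b}$, the core estimate $\E W_p(\rho^m,\rho)\lesssim\Lambda(\log m)^{-b}$ for $\rho$ supported on $E_\Lambda$, and the lower-bound construction on a sphere in a $(\log n)$-dimensional coordinate subspace --- are sound.

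The genuine gap is the one you yourself flag: the inter-shell mass-imbalance cost in the upper bound. If you bound it naively by transporting the imbalance at cost $\diam(E_{2^jM_q})^p$ per unit mass and use $\E|\mu^n(A_j)-\mu(A_j)|\le\sqrt{\mu(A_j)/n}$, you get a sum of order $n^{-1/2}\sum_j 2^{jp}2^{-jq/2}$, which converges only when $q>2p$, not under the stated hypothesis $q>p$. This is not a bookkeeping issue; it is where the actual difficulty of the theorem lives. The standard repair is to truncate the shell decomposition at $J$ with $2^{Jq}\asymp n$ (so that the finite sum over $j\le J$ is controlled polynomially in $n$ even when $p>q/2$) and to handle the residual mass beyond shell $J$ directly via H\"older and the $q$-th moment, $\int_{A_{>J}}\|x\|^p\,\dd\mu\le M_q^p\,\mu(A_{>J})^{1-p/q}$. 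Without this truncation step your upper-bound argument does not close under the hypothesis $1\le p<q$, so as written the proposal proves the theorem only for $q>2p$. The same truncation also resolves your worry about replacing $\log m_j$ by $\log n$: for $j\le J$ the shells carry $n\mu(A_j)\gtrsim n^{\delta}$ samples with high probability, and the exceptional event is absorbed into the tail term.
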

\begin{theorem}[Theorem 4.2 in~\cite{10.3150/19-BEJ1151}, Exponential Decay]
Define the distribution class
\[
\mathcal P_{exp}(q,\alpha,M_q):=\left\{\:\mu:\: \E_{X\sim\mu}\big[ \sum\limits_{m=1}^{\infty}(\alpha^{m-1}X_m)^2\big]^{\frac q2}\leq M_q^q\right\}.
\]
If $p, q, \alpha$ are constants such that $1 \leq p < q$ and $\alpha > 1$, then there exist
positive constants $\underline{c}_{p,q,\alpha}$, $\bar{c}_{p,q,\alpha}$ depending on $(p,q,\alpha)$ such that
\[
\underline{c}_{p,q,\alpha} M_q e^{-\sqrt{\log\alpha\log n}}\leq \sup\limits_{\mu\in\mathcal P_{exp}(q,\alpha,M_q)}\E W_p(\mu^n,\mu)\leq \bar{c}_{p,q,\alpha}M_qe^{-\sqrt{\log\alpha\log n}}.
\]
\end{theorem}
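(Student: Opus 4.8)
The plan is to prove the upper and lower bounds separately, in each case using the exponential decay built into $\mathcal P_{exp}(q,\alpha,M_q)$ to reduce the infinite-dimensional problem to a finite-dimensional empirical-measure estimate and then optimizing a truncation level; by the $1$-homogeneity of $W_p$ and the scaling of the class under dilations one may assume $M_q=1$. Write $W$ for the diagonal operator $We_m=\alpha^{m-1}e_m$ and, for $D\in\mathbb N$, let $P_D$ denote the orthogonal projection onto $\span\{e_1,\dots,e_D\}$. For the upper bound, begin from the triangle inequality $W_p(\mu^n,\mu)\le W_p(\mu^n,P_D\#\mu^n)+W_p(P_D\#\mu^n,P_D\#\mu)+W_p(P_D\#\mu,\mu)$. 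Since
\[
\|x-P_Dx\|^2=\sum_{m>D}x_m^2=\sum_{m>D}\alpha^{-2(m-1)}(\alpha^{m-1}x_m)^2\le\alpha^{-2D}\|Wx\|^2,
\]
the product couplings $(P_D,\mathrm{id})_\#\mu$ and $(P_D,\mathrm{id})_\#\mu^n$ show that the first and third terms are at most $(\E\|x-P_Dx\|^p)^{1/p}\le\alpha^{-D}(\E\|Wx\|^p)^{1/p}\le\alpha^{-D}$ in expectation, the last step by Jensen since $p<q$ and $M_q=1$. The middle term is genuinely $D$-dimensional: $P_D\#\mu$ is a measure on $\mathbb R^D$ with $q$-th moment $\le1$ (because $\sum_{m\le D}x_m^2\le\sum_m(\alpha^{m-1}x_m)^2$, as $\alpha\ge1$) and $P_D\#\mu^n$ is its empirical measure from $n$ samples, so Theorem~1 of \cite{fournier2013rate} in dimension $D$ (valid once $D>2p$, i.e.\ for all large $n$) bounds it by $c(p,q,D)\,n^{-1/D}$, up to a polynomial term $n^{-(q-p)/(pq)}$ that is negligible at the scale below. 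Adding the two contributions and choosing $D\asymp\sqrt{\log n/\log\alpha}$ equalizes $\alpha^{-D}$ with $n^{-1/D}$, both of order $\exp(-\sqrt{\log\alpha\log n})$; for the remaining bounded range of $n$ the trivial bound $\E W_p(\mu^n,\mu)\le2$ suffices after enlarging the constant.

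For the lower bound it is enough to exhibit a single $\mu\in\mathcal P_{exp}(q,\alpha,1)$ of the right order. Take $D\asymp\sqrt{\log n/\log\alpha}$ and let $\mu$ be uniform on a rectangular $\delta$-grid of at least $2n$ points inside the box $\prod_{m=1}^D[-c_D\alpha^{-(m-1)},c_D\alpha^{-(m-1)}]\subset\span\{e_1,\dots,e_D\}$, with $c_D\asymp D^{-1/2}$ so that every grid point $x$ obeys $\|Wx\|\le c_D\sqrt D\le1$, hence $\mu\in\mathcal P_{exp}(q,\alpha,1)$; the cardinality requirement $\prod_{m\le D}(2c_D\alpha^{-(m-1)}/\delta)\ge2n$ then determines $\delta$, which after the optimal choice of $D$ is of the stretched-exponential order $\exp(-\sqrt{\log\alpha\log n})$ up to the constant in the exponent. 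Since $\mu^n$ is carried by at most $n$ of the $\ge2n$ grid points, at least half the mass of $\mu$ sits on grid points at distance $\ge\delta$ from $\supp\mu^n$, so $W_p^p(\mu^n,\mu)\ge\tfrac12\delta^p$ for every realization; taking expectations gives the lower estimate.

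I expect the main obstacle to be the quantitative bookkeeping that forces the exponents of the upper and lower bounds to coincide, rather than differing by a bounded factor inside the square root: the single-truncation upper bound pays the tail at scale $\alpha^{-D}$, whereas a grid confined to the first $D$ coordinates lives at the geometric-mean scale $\alpha^{-(D-1)/2}$, and the finite-dimensional constant $c(p,q,D)$ of \cite{fournier2013rate} carries an a priori dependence on $D$. Reconciling these — and pinning down the sharp constant under the root — calls for replacing the crude single truncation by a multi-scale (dyadic) decomposition of $X$ adapted to the weights $\alpha^{m-1}$: at resolution $\epsilon$ the effective number of active coordinates is $\asymp\log(1/\epsilon)/\log\alpha$ and the bulk has covering number of order $\exp\!\big(c\,(\log 1/\epsilon)^2/\log\alpha\big)$, so that summing the transport costs scale by scale produces a sum dominated by the critical resolution $\epsilon^\ast$ at which this covering number reaches $n$, namely $\epsilon^\ast\asymp\exp(-\sqrt{\log\alpha\log n})$; one must check along the way that all dimension-dependent prefactors are only sub-exponential in $D$, so that they do not leak into the constant of the exponent, and a companion construction concentrated at $\epsilon^\ast$ then yields the matching lower bound.
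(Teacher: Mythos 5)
This statement is not proved in the paper at all: it is Theorem 4.2 of \cite{10.3150/19-BEJ1151}, quoted verbatim in Subsection \ref{subsec: comparison_Wasserstein} purely to contrast the Wasserstein rate $e^{-\sqrt{\log\alpha\log n}}$ with the dimension-free rate $n^{-1/(2p)}$ of Theorem \ref{thm: empirical}. So there is no in-paper proof to compare yours against; any assessment has to be of your argument on its own terms, measured against the cited source.

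On those terms, your strategy (truncate to $D$ coordinates, pay $\alpha^{-D}$ for the tail, pay a finite-dimensional empirical rate $\approx n^{-1/D}$ for the bulk, optimize $D\asymp\sqrt{\log n/\log\alpha}$; grid construction for the lower bound) is the standard route and does yield two-sided bounds of stretched-exponential type. But as written it does not prove the theorem as stated, and you have in fact diagnosed the reason yourself. First, the Fournier--Guillin constant in dimension $D$ is (at best) exponential in $D$, so with $D\asymp\sqrt{\log n}$ it contributes a factor $e^{cD}$ that shifts the constant inside the square root: your upper bound comes out as $e^{-c_{1}\sqrt{\log\alpha\log n}}$ with $c_{1}<1$. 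Second, your grid occupies a box of volume $\asymp\alpha^{-D(D-1)/2}$, so the spacing $\delta$ solving the cardinality constraint satisfies $\log(1/\delta)\gtrsim \tfrac{D}{2}\log\alpha+\tfrac{\log n}{D}$, whose optimum is $\sqrt{2\log\alpha\log n}$, i.e.\ a lower bound $e^{-c_{2}\sqrt{\log\alpha\log n}}$ with $c_{2}>1$. The theorem asserts matching exponents $c_{1}=c_{2}=1$ (up to multiplicative constants outside the exponential), and closing that gap is exactly the content of the covering-number/multiscale argument in \cite{10.3150/19-BEJ1151}, which you only sketch in the final paragraph: one needs $\log N(\epsilon)\asymp(\log(1/\epsilon))^{2}/\log\alpha$ for the weighted ellipsoid, a chaining bound for $\E W_p^p$ in terms of $N(\cdot)$ with dimension-free prefactors, and a lower-bound construction saturating the covering number at the critical scale. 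Until that is carried out, what you have is a correct proof of the weaker statement with two different constants under the square root, not of the theorem as quoted.
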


The convergence rate in Wasserstein distance is a finite power of $(\log n)^{-1}$ for polynomial decay and a finite power of $e^{-\sqrt{\log n}}$ for exponential decay, both of which are significantly slower than that of $n^{-\frac{1}{2p}}$ in Theorem~\ref{thm: empirical}. We conclude that the sliced Wasserstein distance indeed reduces the computational complexity.

\section*{Acknowledgements}
The author sincerely thanks Professor Dejan Slep\v{c}ev and Sangmin Park for the stimulating discussion and precious advice.

\bibliographystyle{alpha}
\bibliography{reference}
\end{document}